\newcommand{\C}{\mathbb{C}}
\newcommand{\R}{\mathbb{R}}
\newtheorem{theorem}{Theorem}[section]
\newtheorem{lemma}[theorem]{Lemma}
\newtheorem{proposition}[theorem]{Proposition}
\newtheorem{corollary}[theorem]{Corollary}
\theoremstyle{definition}
\newtheorem{remark}[theorem]{Remark}
\newtheorem{notation}[theorem]{Notation}
\newtheorem{conventionandnotation}[theorem]{Convention \& Notation}
\newtheorem{fact}[theorem]{Fact}
\newtheorem{definition}[theorem]{Definition}
\title[Stabilizations via Lefschetz Fibrations and Exact Open Books]
{Stabilizations via Lefschetz Fibrations and \\Exact Open Books}
\author{Selman Akbulut}
\address{Department of Mathematics, Michigan State University, Lansing MI, USA}
\email{akbulut@math.msu.edu}
\thanks{The first author is partially supported by NSF FRG grant DMS- 0905917}
\author{M. Firat Arikan}
\address{Max Planck Institute for Mathematics, Bonn, GERMANY}
\email{arikan@mpim-bonn.mpg.de}
\thanks{The second author is partially supported by NSF FRG grant DMS-1065910}
\subjclass[2000]{57R65, 58A05, 58D27}
\keywords{Contact \& symplectic structures, open book, Lefschetz fibration, stabilization}
\date{\today}
\begin{document}

\maketitle

\begin{abstract}
We show that if a contact open book $(\Sigma,h)$ on a $(2n+1)$-manifold $M$ ($n\geq1$) is induced by a Lefschetz fibration $\pi:W \to D^2$, then there is a one-to-one correspondence between positive stabilizations of $(\Sigma,h)$ and \emph{positive stabilizations} of $\pi$. More precisely, any positive stabilization of $(\Sigma,h)$ is induced by the corresponding positive stabilization of $\pi$, and conversely any positive stabilization of $\pi$ induces the corresponding positive stabilization of $(\Sigma,h)$. We define \emph{exact open books} as boundary open books of compatible exact Lefschetz fibrations, and show that any exact open book carries a contact structure. Moreover, we prove that there is a one-to-one correspondence (similar to the one above) between \emph{convex stabilizations} of an exact open book and \emph{convex stabilizations} of the corresponding compatible exact Lefschetz fibration. We also show that convex stabilization of compatible exact Lefschetz fibrations produces symplectomorphic completions.
\end{abstract}

%==============================================
%==============================================
%==============================================
%==============================================
%==============================================

\section{Introduction}

In the last decade the correspondence given by Giroux \cite{Giroux}, between contact structures and open book decompositions have led to many developments in understanding the relations between the contact geometry and the topology of the underlying odd dimensional closed manifolds. This correspondence is much stronger in dimension three and has been used as a bridge between four dimensional geometries and topology, leading much progress in understanding of different types of fillability and Lefschetz type fibrations.

\vspace{.05in}

One of the main features used in the above correspondence is positive stabilization. Namely, if we positively stabilize an open book $(\Sigma,h)$ carrying a contact structure $\xi$ on a closed $3$-manifold $M$, then the resulting open book still carries $\xi$. Such stabilizations can be interpreted as taking the contact connect sum of $(M,\xi)$ with $(S^3,\xi_{st})$ where $\xi_{st}$ is the unique tight (Stein fillable) contact structure on the $3$-sphere $S^3$. In terms of open books, this corresponds to taking the Murasugi sum (or plumbing) of $(\Sigma,h)$ with the open book $(H^+, \tau_C)$ on $S^3$ where $H^+$ is the positive (left-handed) Hopf band and $\tau_C$ denotes the right-handed Dehn twist along the core circle $C$ in $H^+$.

\vspace{.05in}

To get analogous statements for higher dimensions, one can replace $(H^+, \tau_C)$ with its generalization $\mathcal{OB}$, which is an open book carrying the standard contact structure $\xi_0$ on $(2n+1)$-sphere $S^{2n+1}$ and obtained from a certain Milnor fibration. The pages of $\mathcal{OB}$ are diffeomorphic to the closed tangent unit disk bundle $\mathcal{D}(TS^n)$ over $S^n$ and its monodromy is the (generalized) right-handed Dehn twist along the zero section (see below). Then one can define a positive stabilization of an open book $(\Sigma,h)$ carrying a contact structure $\xi$ on an $(2n+1)$-dimensional closed manifold $M^{2n+1}$ by taking the Murasugi sum of $(\Sigma,h)$ with $\mathcal{OB}$, along a properly embedded Lagrangian $n$-ball $L$ in $\Sigma $ with Legendrian boundary and a fiber in $\mathcal{D}(TS^n)$. Again this amounts to taking the contact connect sum of $(M^{2n+1},\xi)$ with $(S^{2n+1},\xi_0)$ and stabilized open book still carries $\xi$ \cite{Giroux}. In terms of contact surgery and Weinstein handles, a positive stabilization corresponds to performing (resp. attaching) a pair of subcritical and critical surgeries (resp. Weinstein handles) which cancels each other (see \cite{Van Koert} for a proof).

\vspace{.05in}

One of the missing part of this picture is the relation of such operations to Lefschetz fibrations. The aim of the present work is to provide some results to fill this gap. Through out the paper, the base space of any Lefschetz fibration is assumed to be the $2$-disk, and we focus only on the open books which are induced by Lefschetz fibrations. We study the open book $\mathcal{OB}$ (which is induced by a certain Lefschetz fibration $\mathcal{LF}$ on the standard $(2n+2)$-ball) in Section \ref{sec:Preliminaries} where we also recall positive stabilizations of open books and the characterization of Lefschetz fibrations. In Section \ref{sec:Positive stabilization of Lefschetz fibrations}, we explicitly define a process, called \emph{positive stabilization} on Lefschetz fibrations and show that there is a one-to-one correspondence between positive stabilizations of open books and Lefschetz fibrations. \emph{Exact open books} are introduced in Section \ref{sec:Exact_Open_Books} as boundaries of compatible exact Lefschetz fibrations.  After recalling Weinstein handles and isotropic setups briefly in Section \ref{sec:Isotropic Setups and Weinstein Handles}, we will get a similar correspondence for exact open books and compatible exact Lefschetz fibrations in Section \ref{sec:Convex Stabilization}, where we also define \emph{convex stabilization} as an exact symplectic version of positive stabilization. We remark that any observation we will make here is also true for dimensions $3$ and $4$, and so the work done here can be thought as canonical generalizations of the corresponding $3$- and $4$-dimensional results to higher dimensions.

%==============================================
%==============================================
%==============================================
%==============================================
%==============================================

\section{Preliminaries} \label{sec:Preliminaries}

\subsection{The open book $\mathcal{OB}$ and the associated Lefschetz fibration $\mathcal{LF}$}
Consider the polynomial $P$ on the complex space $\mathbb{C}^{n+1}$ (for $n\geq 1$) given by $$P(z_1,...,z_{n+1})=z_1^2+z_2^2+ \cdots +z_{n+1}^2.$$
It is clear that the only critical point of $P$ occurs at the origin, and so the intersection of the zero set $Z(P)$ of $P$ with the sphere $$\mathbb{S}_{\varepsilon}^{2n+1}=\{|z_1|^2+|z_2|^2+ \cdots + |z_{n+1}|^2=\varepsilon^2\},$$
where $\varepsilon>0$ is small enough, is a smooth manifold $K$ of dimension $2n-1$. $K$ is a member of a family known as Brieskorn manifolds introduced in \cite{Brieskorn}. It is known by \cite{Milnor} that the complement $\mathbb{S}_{\varepsilon}^{2n+1} \setminus K$ of $K$ fibers over the unit circle $S^1 \subset \mathbb{C}$ via the map $\Theta:\mathbb{S}_{\varepsilon}^{2n+1} \setminus K \to S^1$ given by $$\Theta(z_1,...,z_{n+1})=\frac{P(z_1,...,z_{n+1})}{|P(z_1,...,z_{n+1})|}.$$

Let $\mathcal{OB}$ be the open book on $\mathbb{S}_{\varepsilon}^{2n+1}$ determined by the pair $(K,\Theta)$. For any $e^{i\theta} \in S^1$, the Milnor fiber (or the {\em page} of $\mathcal{OB}$) $F_{\theta}:=\Theta^{-1}(e^{i\theta})$ is parallelizable and has the homotopy type of $S^n$ \cite{Milnor}, and indeed it can be identified with the total space of the tangent bundle $TS^n$ of the $n$-sphere $S^n$ (e.g. \cite{Dimca}, p. 81). By considering the closure $\bar{F}_{\theta}$ as the closed tangent unit disk bundle $\mathcal{D}(TS^n)$ over $S^n$, we can identify the {\em binding} $K$ of $\mathcal{OB}$ as the tangent unit sphere bundle $\mathcal{S}(TS^n)$ over $S^n$. For our purposes we identify $TS^n$ with the cotangent bundle $T^*S^n$ by using the natural duality, and assume that each page $\bar{F}_{\theta}$  of $\mathcal{OB}$ is diffeomorphic to the cotangent unit disk bundle $\mathcal{D}(T^*S^n)$, and so the binding $K$ is diffeomorphic to the cotangent unit sphere bundle $\mathcal{S}(T^*S^n)$.

Now we define the function $\Pi:\mathbb{B}_{\varepsilon}^{2n+2} \to D^2$ from $(2n+2)$-ball $$\mathbb{B}_{\varepsilon}^{2n+2}=\{|z_1|^2+|z_2|^2+ \cdots +|z_{n+1}|^2\leq\varepsilon^2\} \subset \mathbb{C}^{n+1}$$ onto the unit disk $D^2 \subset \mathbb{C}$ by restricting $P$ and then normalizing by $\varepsilon^2$, that is, $$\Pi=\frac{1}{\varepsilon^2}P|_{\mathbb{B}_{\varepsilon}^{2n+2}}.$$
By definition (see \cite{Kas}, for instance) $\Pi$ is a local model for a Lefschetz fibration over the unit disk having only one singular fiber over the origin. Also \emph{regular} fibers $\Pi^{-1}(z), z\neq0$, are diffeomorphic to $\mathcal{D}(T^*S^n)$ because $\Pi$ is a topological locally trivial fibration on $D^2 \setminus \{0\}$ \cite{Le} (e.g. \cite{Dimca}, Chapter 3). Therefore, $\Pi$ defines a Lefschetz fibration $\mathcal{LF}$ on $\mathbb{B}_{\varepsilon}^{2n+2}$ which induces the open book $\mathcal{OB}$ on the boundary sphere $\mathbb{S}_{\varepsilon}^{2n+1}$. By definition, the monodromy of $\mathcal{LF}$ is the monodromy of $\mathcal{OB}$. According to \cite{Deligne_Katz,Kas} this monodromy is (up to isotopy) equal to the right-handed Dehn twist $$\delta: \mathcal{D}(T^*S^n) \rightarrow \mathcal{D}(T^*S^n)$$ along the \emph{vanishing cycle} which is the zero section (a copy of $S^n$) in $\mathcal{D}(T^*S^n)$. To describe $\delta$ precisely, identify the interior of a page with $T^*S^n$ and write the points in $T^*S^n$ as $(\textbf{q},\textbf{p}) \in \mathbb{R}^{n+1} \times \mathbb{R}^{n+1}$ such that $|\textbf{q}|=1$ and $\textbf{q} \perp \textbf{p}$. Then
$$\delta (\textbf{q}, \textbf{p})=
\begin{pmatrix} \cos g(|\textbf{p}|) &  |\textbf{p}|^{-1} \sin g(|\textbf{p}|) \\
                -|\textbf{p}| \sin g(|\textbf{p}|) & \cos g(|\textbf{p}|) \end{pmatrix}
\begin{pmatrix} \textbf{q} \\ \textbf{p}\end{pmatrix}$$
where $g$ is a smooth function that increases monotonically from $\pi$ to $2\pi$ on some interval, and outside this interval $g$ is identically equal to $\pi$ or $2\pi$. Observe that $\delta$ is the antipodal map on the zero section $S^n \times \{\textbf{0}\}=\{(\textbf{q},\textbf{p}) \,|\, |\textbf{q}|=1, \textbf{p}=\textbf{0}\}$, while it is the identity map for $|\textbf{p}|$ large. Note that as abstract open book $\mathcal{OB}$ is determined by the pair $(\mathcal{D}(T^*S^n),\delta)$.

Now let $z_j=x_j+iy_j$ for $j=1,...,n+1$. Then with respect to the complex coordinates $z=(z_1,...,z_{n+1})$, the \emph{standard Stein structure} on $\mathbb{C}^{n+1}$ (and hence on $\mathbb{B}_{\varepsilon}^{2n+2}$) is defined by the pair $$(J_0,\psi_0)=(i \times \cdots \times i, |\,z|\,^2).$$  This defines the \emph{standard symplectic} (indeed \emph{K\"ahler}) \emph{form}  $$\omega_0=-d(d\psi_0 \circ J_0)=\sum_{j=1}^{n+1}dx_j \wedge dy_j$$
whose \emph{Liouville vector field} $\chi_0 $ (i.e., satisfying $\mathcal{L}_{\chi_0}\omega_0=\omega_0$) is given by $$\chi_0= \frac{1}{2}\sum_{j=1}^{n+1} (x_j \,\partial x_j+y_j \,\partial y_j).$$
Then on the boundary sphere $\mathbb{S}_{\varepsilon}^{2n+1}$, the $1$-form $$\alpha_0=\iota_{\chi_0}\omega_0=\frac{1}{2}\sum_{j=1}^{n+1}(x_j \,dy_j-y_j \,dx_j)=\frac{i}{4}\sum_{j=1}^{n+1}(z_j \,d\bar{z}_j-\bar{z}_j \,dz_j)$$ is a \emph{contact form} (i.e., $\alpha_0 \wedge (d\alpha_0)^{\wedge n}|_{\,\mathbb{S}_{\varepsilon}^{2n+1}}>0$). The codimension one plane distribution  kernel $\xi_0=\textrm{Ker} (\alpha_0)$ is called the \emph{standard contact structure} on $\mathbb{S}_{\varepsilon}^{2n+1}$.

The compatibility between contact structures and open books is defined as follows:

\begin{definition} [\cite{Giroux}] \label{def:Compatibility}
We say that a contact structure $\xi = \textrm{Ker}(\alpha)$ on $M$ is
\emph{carried by} (or \emph{supported by}) an open book $(B, f)$ on $M$ (where $B$ is the binding), if the following conditions hold:

\begin{itemize}
\item[(i)] $(B, \alpha |_{TB})$ is a contact manifold.
\item[(ii)] For every $t\in S^1$, the
page $X=f^{-1}(t)$ is a symplectic manifold with symplectic
form $d\alpha$.
\item[(iii)] If $\bar{X}$ denotes the closure of a page $X$ in $M$, then the orientation of
$B$ induced by its contact form $\alpha |_{TB}$ coincides with its
orientation as the boundary of $(\bar{X}, d\alpha)$.
\end{itemize}
\end{definition}

The open book $\mathcal{OB}$ has been studied before, but since it is one of the main building blocks of the present paper and for completeness, here we discuss its important aspect:

\begin{lemma} \label{lem:Compatibility}
The open book $\mathcal{OB}$ carries the standard contact structure $\xi_0$ on $\mathbb{S}_{\varepsilon}^{2n+1}$ inherited from the standard Stein structure on $\mathbb{B}_{\varepsilon}^{2n+2}$.
\end{lemma}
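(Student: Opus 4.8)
The plan is to verify the three conditions of Definition~\ref{def:Compatibility} directly for $\mathcal{OB}=(K,\Theta)$ using the explicit standard contact form $\alpha_0=\iota_{\chi_0}\omega_0$ on $\mathbb{S}_\varepsilon^{2n+1}$. Since $\Theta=P/|P|$ and the pages $F_\theta=\Theta^{-1}(e^{i\theta})$ are level sets of the argument of $P$, the heart of the matter is to understand how $\alpha_0$ and $d\alpha_0=\omega_0|_{\mathbb{S}_\varepsilon^{2n+1}}$ restrict to these pages and to the binding $K=Z(P)\cap\mathbb{S}_\varepsilon^{2n+1}$. The key structural fact I would exploit is that everything in sight is built from the K\"ahler data $(J_0,\omega_0,\alpha_0)$, so compatibility should follow from the interplay between the complex-analytic fibration $\Theta$ and the standard symplectic form, essentially because $P$ is holomorphic.

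First I would check condition~(ii): a page $\bar{X}=\bar{F}_\theta$ must be symplectic with form $d\alpha_0=\omega_0|_{\mathbb{S}_\varepsilon^{2n+1}}$. The natural approach is to show that each page is (an isotopic copy of) a piece of a complex hypersurface-type slice on which $\omega_0$ restricts nondegenerately. Because $\Theta$ is the normalized argument of the holomorphic function $P$, the fiber $F_\theta$ can be described as $\{P = r e^{i\theta}\colon r>0\}\cap \mathbb{S}_\varepsilon^{2n+1}$, and on such a set the complex structure $J_0$ together with the K\"ahler form $\omega_0$ gives an almost-complex (in fact symplectic) structure; nondegeneracy of $\omega_0|_{F_\theta}$ follows from the fact that the page is a real $2n$-dimensional submanifold transverse to the Liouville/Reeb directions. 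I would make this precise by noting that $\omega_0$ is K\"ahler, so its restriction to any $J_0$-invariant (or almost-complex) $2n$-plane is nondegenerate, and then argue that $T F_\theta$ is close enough to $J_0$-invariant for this to apply, or alternatively compute $\omega_0^{\wedge n}|_{F_\theta}\neq 0$ directly.

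Next I would verify condition~(i), that $(K,\alpha_0|_{TK})$ is a contact manifold, and condition~(iii), the orientation compatibility. For~(i), since $K$ is the transverse intersection of the complex hypersurface $Z(P)$ with the sphere, $K$ inherits a CR/contact structure, and $\alpha_0|_{TK}$ should be exactly the standard contact form on the Brieskorn manifold obtained by restricting the ambient contact form; I would check $\alpha_0\wedge(d\alpha_0)^{\wedge(n-1)}|_{TK}>0$, using that $K$ is a real hypersurface-type object inside the symplectic pages. Condition~(iii) is then a bookkeeping matter of matching the co-orientation induced by $\alpha_0|_{TK}$ with the boundary orientation of $(\bar{F}_\theta,d\alpha_0)$; this follows from the positivity of $\alpha_0\wedge(d\alpha_0)^{\wedge n}$ on the sphere, which ties together the Reeb direction transverse to the pages with the outward normal along $K$.

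The main obstacle I anticipate is condition~(ii), specifically establishing nondegeneracy of $d\alpha_0$ on the closed pages $\bar{F}_\theta$ uniformly, including near the binding $K$ where the fibration $\Theta$ degenerates and the page closes up onto $K$. Away from $K$ the holomorphicity of $P$ and the K\"ahler property make nondegeneracy essentially automatic, but controlling the behavior as one approaches the binding---and confirming that the symplectic structure extends as a genuine (possibly after the standard completion/rounding) symplectic form on $\bar F_\theta\cong\mathcal{D}(T^*S^n)$---is the delicate step. I expect to handle this either by appealing to the standard Milnor-fibration local model near $K$, where $\Theta$ looks like the argument map of a nondegenerate quadratic form, or by exhibiting $\alpha_0$ as (an isotopic deformation of) the canonical Liouville form on the cotangent bundle $T^*S^n$ under the identification $\bar F_\theta\cong \mathcal{D}(T^*S^n)$, for which compatibility with $\delta$ is classical. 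The rest---(i) and (iii)---I regard as direct, if slightly fiddly, calculations with the explicit form $\alpha_0=\tfrac{i}{4}\sum_j(z_j\,d\bar z_j-\bar z_j\,dz_j)$.
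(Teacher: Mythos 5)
Your outline of conditions (i) and (iii) is broadly in the spirit of the paper (which handles (i) by citing Lutz--Meckert and (iii) by an orientation bookkeeping argument via the splitting $T\mathbb{B}_{\varepsilon}^{2n+2}|_{F_\theta}=\langle \chi_0\rangle\oplus\langle R_0\rangle\oplus TF_\theta$), but there is a genuine gap at the heart of the lemma, namely condition (ii). The mechanism you propose --- that $\omega_0$ is K\"ahler, hence nondegenerate on $J_0$-invariant planes, and that $TF_\theta$ is ``close enough to $J_0$-invariant'' --- cannot be made to work. The pages are never $J_0$-invariant, even approximately in any useful sense: a $2n$-dimensional $J_0$-invariant submanifold of $\mathbb{C}^{n+1}$ would be a complex hypersurface, and no positive-dimensional complex submanifold can lie inside the strictly pseudoconvex hypersurface $\mathbb{S}_{\varepsilon}^{2n+1}$ (maximum principle). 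Each page is instead swept out by the real $(2n-1)$-dimensional slices $\{P=re^{i\theta}\}\cap\mathbb{S}_{\varepsilon}^{2n+1}$ as $r$ varies, and it is precisely the pairing of the extra ($r$-varying) direction against these slices that must be controlled; ``closeness to $J_0$-invariance'' gives no handle on this, and your fallback of computing $\omega_0^{\wedge n}|_{F_\theta}\neq 0$ directly is exactly the thing that needs an argument, not a method for producing one.

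The missing idea --- which is the paper's actual proof --- is to use the homogeneity of $P$ together with the Reeb flow. You do state the correct criterion (transversality of the page to the Reeb direction implies nondegeneracy of $d\alpha_0|_{F_\theta}$, since the kernel of $d\alpha_0$ on $T\mathbb{S}_{\varepsilon}^{2n+1}$ is spanned by the Reeb field $R_0$), but you never prove the transversality, and that is the entire content of (ii). The proof is short once you see it: $R_0$ is, up to a component normal to the sphere, the generator of the rotation $h_t(z)=(e^{4it/\varepsilon^2}z_1,\dots,e^{4it/\varepsilon^2}z_{n+1})$, and since $P$ is homogeneous of degree $2$ one has $\Theta(h_t(z))=e^{8it/\varepsilon^2}\,\Theta(z)$; hence the Reeb flow rotates each page $\Theta^{-1}(e^{i\theta})$ onto other pages at nonzero speed, so $R_0$ is transverse to every page, uniformly including near the binding --- which disposes of the difficulty you flag in your last paragraph with no local model needed. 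Note also that your alternative fallback (identifying $\alpha_0|_{\bar F_\theta}$ with the canonical Liouville form on $\mathcal{D}(T^*S^n)$ ``up to isotopy'') would prove a weaker statement than the lemma claims: the lemma asserts compatibility with the specific contact form $\alpha_0$, and an isotopic deformation would then require a separate Gray-stability argument to transport the conclusion back to $\xi_0$.
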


\begin{proof}
The first condition of compatibility (Definition \ref{def:Compatibility}) immediately follows from \cite{Lutz_Meckert} where they show that the restriction of $\alpha_0$ is a contact form on Brieskorn  manifolds, and so, in particular, on the binding $K$. To check the second one, consider the vector field
$$R= \frac{4i}{\varepsilon^2}\sum_{j=1}^{n+1} z_j \,\partial z_j=R_0+R_1;\;  R_0=\frac{2i}{\varepsilon^2}\sum_{j=1}^{n+1}( z_j \,\partial z_j - \bar{z}_j \,\partial \bar{z}_j ), R_1=\frac{2i}{\varepsilon^2}\sum_{j=1}^{n+1}( z_j \,\partial z_j + \bar{z}_j \,\partial \bar{z}_j ).$$
Observe that $R_0|_{\mathbb{S}_{\varepsilon}^{2n+1}}$ is the \emph{Reeb vector field} of the contact form $\alpha_0|_{\mathbb{S}_{\varepsilon}^{2n+1}}$. (That is, we have $\alpha(R_0)=1, \iota_{R_0} d\alpha_0=0$ on $\mathbb{S}_{\varepsilon}^{2n+1}$.)
The flow of $R$ is computed as $$h_t(z)=(e^{4it/\varepsilon^2} z_1,..., e^{4it/\varepsilon^2} z_{n+1})$$ which is a $1$-parameter group of diffeomorphisms $h_t:\mathbb{C}^{n+1} \setminus Z(P) \to \mathbb{C}^{n+1} \setminus Z(P)$. Now consider the fibration $\Psi: \mathbb{C}^{n+1} \setminus Z(P) \to S^1$ given by $$\Psi(z)=\frac{P(z)}{|P(z)|}.$$ Then $h_t$ maps each fiber $\Psi^{-1}(y)$ diffeomorphically onto the fiber $\Psi^{-1}(e^{i\theta}y)$, and also there is a diffeomorphism $\Psi^{-1}(y) \cong \Theta^{-1}(y) \times \mathbb{R}$ as shown in Chapter 9 of \cite{Milnor}. Furthermore, $h_t$ maps $\mathbb{S}_{\varepsilon}^{2n+1} \setminus K$ diffeomorphically onto itself for all $t$. Hence, we conclude that $h_t$ maps each fiber $\Theta^{-1}(y)$ diffeomorphically onto the fiber $\Theta^{-1}(e^{i\theta}y)$, but this means, in particular, that the Reeb vector field $R_0|_{\mathbb{S}_{\varepsilon}^{2n+1}}$ is transverse to every page of the open book $\mathcal{OB}$ (note that $R_1$ does not live in $T\mathbb{S}_{\varepsilon}^{2n+1}$). So for any page $F_{\theta}$, the rank of $d\alpha_0 |_{F_{\theta}}$ is maximal which is equivalent to saying that $d\alpha_0$ is a symplectic form on $F_{\theta}$.

For the third condition, on $T\mathbb{B}_{\varepsilon}^{2n+2}|_{\mathbb{S}_{\varepsilon}^{2n+1}}$ we compute $\omega_0(\chi_0,R_0)=1$, so $\{\chi_0,R_0\}$ form a non-degenerate pair with respect to $\omega_0=d\alpha_0$. Therefore, for a fixed page $$F_{\theta}=\{z\, | \, \Theta(z)=e^{i\theta} \}  \subset \mathbb{S}_{\varepsilon}^{2n+1} \setminus K$$
of the fibration $\Theta$, the tangent bundle $T\mathbb{B}_{\varepsilon}^{2n+2}$ restricted to $F_{\theta}$ is decomposed as $$T\mathbb{B}_{\varepsilon}^{2n+2}|_{F_{\theta}}=\langle \chi_0 \rangle \oplus T\mathbb{S}_{\varepsilon}^{2n+1}|_{F_{\theta}}=\langle \chi_0 \rangle \oplus \langle R_0 \rangle \oplus T F_{\theta}$$ where we use the fact that the Liouville vector field $\chi_0$ is transverse to $\mathbb{S}_{\varepsilon}^{2n+1}$ (which is of contact type) and that $R_0$ is transverse to $F_{\theta}$. This shows that $(F_{\theta},\omega_0|_{F_{\theta}})$ is a symplectic submanifold of $(\mathbb{B}_{\varepsilon}^{2n+2},\omega_0)$ and, in particular, that the orientation on $F_{\theta}$ given by $\omega_0 |_{F_\theta}$ is inherited from the orientation on $\mathbb{B}_{\varepsilon}^{2n+2}$ given by $\omega_0$.

Write $\alpha'=\alpha_0|_{K}$, $F=F_{\theta}$. To finish the proof, we need to check that the orientation on $ \partial F=K$ given by the form $\alpha' \wedge (d\alpha')^{\wedge n-1}$ coincides with the one induced by the orientation on $F$ given by the volume form $(d\alpha_0|_F)^{\wedge n}$: We showed above that the latter orientation on $F$ is inherited from the one on $\mathbb{B}_{\varepsilon}^{2n+2}$ given by the standard Stein structure $(J_0,\psi_0)$. Note that the orientation on $(\mathbb{S}_{\varepsilon}^{2n+2},\xi_0)$ given by the volume form $\alpha_0 \wedge (d\alpha_0)^{\wedge n}$ is also coming from this Stein structure. Moreover, the orientation on $(K,\xi':=\textrm{Ker} (\alpha')) \subset (\mathbb{S}_{\varepsilon}^{2n+1},\xi_0)$ determined by $\alpha' \wedge (d\alpha')^{\wedge n-1}$ matches up with the one inherited (as a contact submanifold) from $(\mathbb{S}_{\varepsilon}^{2n+1},\xi_0)$. Hence, the mentioned two orientations on $K$ must coincide.
\end{proof}

\subsection{Positive stabilization of open books}

We first recall the plumbing or 2-Murasugi sum of two \emph{contact open books} (i.e., open books carrying contact structures): Let $(M_i, \xi_i)$ be two closed contact manifolds such that each $\xi_i$ is carried by an open book $(\Sigma_i,h_i)$ on $M_i$. Suppose that $L_i$ is a properly embedded Lagrangian ball in $\Sigma_i$ with Legendrian boundary $\partial L_i \subset \partial \Sigma_i$. By the Weinstein neighborhood theorem each $L_i$ has a standard neighborhood $N_i$ in $\Sigma_i$ which is symplectomorphic to $(T^* D^n, d\lambda_{\textrm{can}})$ where $\lambda_{\textrm{can}}=\textbf{p}\textbf{d}\textbf{q}$ is the canonical $1$-form on $\mathbb{R}^n \times \mathbb{R}^n$ with coordinates $(\textbf{q},\textbf{p})$. Then the \emph{plumbing}
or \emph{2-Murasugi sum} $(\mathcal{P}(\Sigma_1,\Sigma_2;L_1,L_2),h)$ of $(\Sigma_1,h_1)$ and $(\Sigma_2,h_2)$ \emph{along} $L_1$ and $L_2$ is the open book on the connected sum $M_1 \# M_2$ with the pages obtained by gluing $\Sigma_i$'s together along $N_i$'s by interchanging $\textbf{q}$-coordinates in $N_1$ with $\textbf{p}$-coordinates in $N_2$, and vice versa. To define $h$, extend each $h_i$ to $\tilde{h}_i$ on the new page by requiring $\tilde{h}_i$ to be identity map outside the domain of $h_i$. Then the monodrodmy $h$ is defined to be $\tilde{h}_2 \circ \tilde{h}_1$. Without abuse of notation we will drop the ``tilde'' sign, and write $h=h_2 \circ h_1$.

The following terminology was given in \cite{Giroux}. We describe it in a slightly different way so that it fits into the notation of the present paper.

\begin{definition}[\cite{Giroux}] \label{def:Stabilization_Contac_Open Book}
Suppose that $(\Sigma,h)$ carries the contact structure $\xi=\textrm{Ker}(\alpha)$ on a $(2n+1)$-manifold $M$. Let $L$ be a properly embedded Lagrangian $n$-ball in a page $(\Sigma,d\alpha)$
such that  $\partial L \subset \partial \Sigma$ is a Legendrian
$(n-1)$-sphere in the binding $(\partial \Sigma,\alpha |_{\,\partial \Sigma})$.
Then the \emph{positive} (or \emph{standard}) \emph{stabilization} $\mathcal{S_{OB}}[(\Sigma,h);L]$ of $(\Sigma,h)$
\emph{along} $L$ is the open book $(\mathcal{P}(\Sigma,\mathcal{D}(T^*S^n);L,\textbf{D}), \delta \circ h)$ where $\textbf{D}\cong D^n$ is any fiber in $\mathcal{D}(T^*S^n)$.
\end{definition}

\subsection{Characterization of Lefschetz fibrations}
Here we recall the handle decomposition of Lefschetz fibrations as described in \cite{Kas}: Let $\pi: W \rightarrow D^2\subset \C$ be a given Lefschetz fibration with a regular fiber $X^{2n}$ and monodromy $h$. Consider the base disk as $D^2= \{z \in \C : \left|z\right| \leq 2 \}$. We may assume that $0 \in D^2$ and the points on $\partial D^2$ are regular values and that all the critical values $\{\lambda_1,\lambda_2,..,\lambda_{\mu}\}$ of $\pi$ are $\mu$ roots of unity. Such a $\pi$ is called a \emph{normalized Lefschetz fibration}. Since every Lefschetz fibration can be normalized, throughout the paper all Lefschetz fibrations will be assumed to be normalized. Define a Morse function $F:W \rightarrow [0,4] \subset \R$ given by $F(x)= {\left| \pi(x) \right|}^2$. Then outside of the set $F^{-1}(0) \cup F^{-1}(4)$, $F$ has only nondegenerate critical points of index $n+1$ (see \cite{Andreotti_Frankel}). Since $\left|\lambda_i\right|=1$ for all $i$, the map $\pi$ has no critical values on the set $D_t= \{z \in \C : \left|z\right| \leq t \}$ for $t<1$ and hence

\begin{center}
 $F^{-1}([0,t])=\pi^{-1}(D_t) \cong X \times D^2 \quad $ for $t<1$.
\end{center}

On the other hand, for $t>1$, $\pi^{-1}(D_t)$ is diffeomorphic to the manifold
obtained from $X \times D^2$ by attaching $\mu$ handles of
index $n+1$, via the attaching maps $$\Phi_j: S^n \times D^{n+1}
\rightarrow \partial(X \times D^2)= X \times S^1, \quad j=1, 2,...,
\mu.$$

Let $\Phi^{\prime}_j : \epsilon^{n+1} \rightarrow \nu$
be the framing of the $j$-th handle, where $\epsilon^k$ denotes the
trivial bundle of rank $k$, and $\nu$ denotes the normal bundle of the attaching sphere
$\Phi_j(S^n \times \{0\})$ in $\partial(X \times D^2)$.

\begin{fact} [\cite{Kas}] The embeddings $\Phi_j$ may be chosen so that for
each $j=1,2,...,\mu$ there exists $z_j$ such that
$ \Phi_j(S^n \times \{0\}) \subset \pi^{-1}(z_j)\cong X $.
\end{fact}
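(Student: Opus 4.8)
The plan is to reduce the statement to the standard complex-analytic model of a Lefschetz singularity and to recognize the Morse-theoretic attaching sphere as the vanishing cycle, which manifestly lies in a single fiber. First I would fix a critical point $p_j$ with $\pi(p_j)=\lambda_j=e^{i\theta_j}$ and choose, by the complex Morse lemma, holomorphic coordinates $w=(w_1,\dots,w_{n+1})$ near $p_j$ in which $\pi$ takes the normal form $\pi=\lambda_j+\sum_{k=1}^{n+1}w_k^2$ (this is exactly the defining local model of a Lefschetz fibration used for $\mathcal{LF}$ in Section \ref{sec:Preliminaries}). Substituting $w_k=e^{i\theta_j/2}\zeta_k$ with $\zeta_k=a_k+ib_k$ gives $\bar\lambda_j\sum w_k^2=\sum\zeta_k^2$, so that near $p_j$ one computes $F=|\pi|^2=1+2\sum_{k}(a_k^2-b_k^2)+O(|\zeta|^4)$. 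Hence $p_j$ is a nondegenerate critical point of $F$ of index $n+1$, with ascending directions the $a_k$ and descending directions the $b_k$, confirming the index count quoted above.

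Next I would identify the descending manifold of $F$ at $p_j$. To leading order it is the locus $\{a_k=0\}$, i.e. $w_k=e^{i(\theta_j+\pi)/2}b_k$ with $b_k\in\R$; along this locus $\pi=\lambda_j+\sum w_k^2=e^{i\theta_j}\bigl(1-\textstyle\sum_k b_k^2\bigr)$. Thus the descending manifold projects under $\pi$ onto the radial segment from $\lambda_j$ toward the origin, and for each fixed value $r^2=\sum_k b_k^2$ the sphere $\{\sum_k b_k^2=r^2\}$ lies entirely in the single fiber $\pi^{-1}\bigl(e^{i\theta_j}(1-r^2)\bigr)$. In particular, intersecting the descending manifold with a regular level $F^{-1}(1-\delta)$ for small $\delta>0$ cuts out exactly an $n$-sphere contained in one fiber $\pi^{-1}(z_j)$, where $z_j=e^{i\theta_j}(1-r_\delta^2)$ lies on that radial ray. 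This $n$-sphere is the vanishing cycle of $p_j$.

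I would then invoke the standard Morse-theoretic description of handle attachment: crossing the critical level $F=1$ attaches an index-$(n+1)$ handle whose attaching sphere is precisely the descending sphere computed in the level set just below. Identifying $\pi^{-1}(D_t)\cong X\times D^2$ for $t<1$ and restricting to the boundary $X\times S^1$ then exhibits $\Phi_j(S^n\times\{0\})$ as this vanishing cycle, which lies in the single fiber $\pi^{-1}(z_j)\cong X$, as claimed.

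The main point requiring care is the compatibility of all $\mu$ local pictures with a single global trivialization. I would choose the identification $\pi^{-1}(D_t)\cong X\times D^2$ so that each fiber $\pi^{-1}(z)$, $|z|\le t$, is carried to a slice $X\times\{z\}$ --- for instance via parallel transport along radial rays with respect to an Ehresmann (or symplectic) connection --- so that the property of lying in a single fiber is preserved by the identification. Because the critical values are the distinct $\mu$-th roots of unity, the radial rays through the $\lambda_j$ are distinct and the attaching regions are disjoint in $X\times S^1$; this is what allows the local normal forms, and hence the conclusion $\Phi_j(S^n\times\{0\})\subset\pi^{-1}(z_j)$, to be realized simultaneously for all $j$. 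Matching the orientation and parametrization of the Morse descending sphere with the holomorphic vanishing sphere is routine, and I would not belabor it.
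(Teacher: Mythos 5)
Your argument is correct, and it is essentially the argument of the source the paper is quoting: the paper itself gives no proof of this Fact (it is stated verbatim from \cite{Kas}), and your reconstruction --- the local holomorphic model $\pi=\lambda_j+\sum_k w_k^2$, the computation that $F=|\pi|^2$ has a nondegenerate index-$(n+1)$ critical point whose descending manifold is $\{a_k=0\}$ and projects under $\pi$ to the radial segment through $\lambda_j$, so that the descending sphere in the level $F^{-1}(1-\delta)$ lies in the single fiber $\pi^{-1}\bigl(\lambda_j\sqrt{1-\delta}\bigr)$ --- is exactly Kas's proof. One minor tightening: the hedge ``to leading order'' is unnecessary, since with the Euclidean metric in the $\zeta$-coordinates the locus $\{a_k=0\}$ is exactly invariant under the gradient flow of $F$; choosing the gradient-like vector field to agree with this Euclidean gradient near each $p_j$ (which is precisely the freedom behind ``may be chosen'' in the statement) makes the attaching sphere lie exactly, not just approximately, in one fiber.
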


So, set $\phi_j: S^n \rightarrow X$ to be the embedding defined
by restricting $\Phi_j$ to $S^n \times \{0 \}$. Let $\nu_1$ denote
the normal bundle of $S^n \cong \phi_j(S^n)$ in $X$ corresponding
to the embedding $\phi_j$, and consider $\nu$ as the normal bundle
of $\phi_j(S^n)$ in $F^{-1}(1-\delta)$. Clearly, $\nu \cong \nu_1 \oplus \epsilon$ (as the normal bundle of $X$ in $W$ is trivial). Let $\tau$ denote the tangent bundle of $S^n$.

\begin{fact} [\cite{Kas}] For each $j=1,2,...,\mu$, there exists a bundle
isomorphism $\phi^{\prime}_j : \tau\rightarrow \nu_1$ such that
the framing $\Phi^{\prime}_j$ of the $(n+1)$-handle corresponding
to $\lambda_j$ coincides with $\phi^{\prime}_j$. That is,
$\Phi^{\prime}_j$ is given by the composition
$$\epsilon^{n+1}\stackrel{\equiv}{\longrightarrow} \tau \oplus
\epsilon \stackrel{\phi^{\prime}_j\oplus id}{\longrightarrow} \nu_1
\oplus \epsilon \stackrel{\equiv}{\longrightarrow} \nu .$$
\end{fact}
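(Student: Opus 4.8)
The plan is to prove this by reducing everything to the standard local model of the singularity and computing the framing there by hand. Since $\nu$, $\nu_1$, $\tau$ and the attaching data $\Phi_j,\Phi'_j$ all live in an arbitrarily small neighborhood of the attaching sphere $\phi_j(S^n)$, and since by definition $\pi$ agrees near the $j$-th critical point $p_j$ with the model $\Pi$ of Section~\ref{sec:Preliminaries}, namely $\pi(w)=\lambda_j+\sum_{k=1}^{n+1}w_k^2$ in suitable holomorphic coordinates $w_k=u_k+iv_k$ centered at $p_j$, it suffices to verify the claim in this model (rotate the base so that $\lambda_j=1$).

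First I would pin down the relevant geometric pieces. As observed above, $F=|\pi|^2$ has a nondegenerate index-$(n+1)$ critical point at $p_j$ whose negative eigendirections are the $v$-directions, so its descending manifold is the Lefschetz thimble $\{u=0\}$, an $(n+1)$-disk that projects to a radial segment of the base terminating at $\lambda_j$ and bounds the vanishing cycle $V=\{u=0,\ |v|=\sqrt{\varepsilon}\}\cong S^n$ inside the regular fiber $\pi^{-1}(1-\varepsilon)=\{\,|u|^2-|v|^2=-\varepsilon,\ u\cdot v=0\,\}$. This thimble is the core of the attached $(n+1)$-handle and $V=\phi_j(S^n)$ is its attaching sphere, which is precisely the zero-section Lagrangian in the identification of the Milnor fiber with $\mathcal{D}(T^*S^n)$ from Section~\ref{sec:Preliminaries}.

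Next I would produce $\phi'_j$ by a pointwise computation along $V$. Differentiating the two fiber equations at a point $(0,v_0)\in V$ shows that $T_{v_0}V=\{(0,dv):v_0\cdot dv=0\}$, while the normal space $\nu_1$ to $V$ in the fiber is $\{(du,0):v_0\cdot du=0\}$; hence the identity of $\R^{n+1}$, restricted to $v_0^{\perp}$, yields a canonical isomorphism $\phi'_j:\tau\to\nu_1$, $dv\mapsto du:=dv$, sending each tangent vector of $S^n$ to the corresponding real normal vector. Equivalently, $\phi'_j$ is the isomorphism $T^*S^n\cong TS^n$ supplied by the Weinstein neighborhood theorem, since $V$ is Lagrangian in the symplectic fiber $(\pi^{-1}(1-\varepsilon),\omega_0)$. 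The trivial summand of $\nu\cong\nu_1\oplus\epsilon$ is the single direction transverse to the fiber, i.e.\ the base $S^1$-direction.

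Finally I would read off the framing and match it. The handle's cocore is the $u$-ball, and at the attaching point $(0,v_0)$ its direction vector $a\in\R^{n+1}$ decomposes as $a=a_{\perp}+\langle a,v_0/|v_0|\rangle\,v_0/|v_0|$; a first-order computation of $\pi$ shows that the part $a_{\perp}$ perpendicular to $v_0$ stays tangent to the fiber and fills out $\nu_1$, while the radial part $u\parallel v_0$ changes $\arg\pi$ and hence points in the base $S^1$-direction. Comparing this with the standard trivialization $\epsilon^{n+1}\equiv\tau\oplus\epsilon$ of $T\R^{n+1}|_{S^n}$ (for the sphere $V$ in $v$-space) produces exactly $\Phi'_j=\phi'_j\oplus\mathrm{id}$. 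I expect the only genuine obstacle to be the assertion that this is an \emph{equality} of framings rather than a mere homotopy: one must check that transporting the cocore frame around $V$ introduces no extra twist by an element of $\pi_n SO(n)$ or $\pi_n SO(n+1)$. This is Kas's computation, and it succeeds because in the flat quadratic model the cocore $u$-space is \emph{constant} along $V$ and is related to $\tau$ by the fixed identity of $\R^{n+1}$; the complex structure forces the real/imaginary splitting to be precisely $\phi'_j$ and the radial base coordinate to account for the identity on the $\epsilon$-factor.
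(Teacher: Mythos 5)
Your proposal is correct, and it is essentially the argument behind the cited source: the paper itself gives no proof of this Fact (it is quoted from [Kas]), and Kas's proof is precisely the local-model computation you carry out — reduce to the quadratic model $\pi = \lambda_j + \sum w_k^2$, identify the thimble $\{u=0\}$, the vanishing cycle, and the splitting of the constant cocore $u$-frame into the fiberwise part (giving $\phi'_j$ via the identity of $\R^{n+1}$ on $v_0^{\perp}$) and the radial part (giving the base $\epsilon$-direction). Your closing remark correctly isolates the one point needing care — that the constancy of the $u$-space along $V$ in the flat model yields an actual equality of framings rather than just a homotopy.
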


\begin{definition} [\cite{Kas}]\label{def:Normalized_vanishing_cycle}
$S^n\cong \phi_j(S^n)$ is called a \textit{vanishing cycle} of
$\pi$. The bundle isomorphism $\phi^{\prime}_j : \tau\rightarrow
\nu_1$ is called a \textit{normalization} of $\phi_j$. The pair
$(\phi_j,\phi'_j)$ is called a \textit{normalized
vanishing cycle}.
\end{definition}

Let $\mathcal{D}(TS^{n}) \subset \tau$ denote the closed tangent unit disk bundle of $S^n$. By the tubular neighborhood
theorem and the canonical isomorphism $\mathcal{D}(T^*S^{n})\cong \mathcal{D}(TS^{n})$, we can apply the right-handed Dehn twist $\delta$ to a tubular neighborhood of $\phi_j(S^n)$ in $X$, and we can extend $\delta$, by the identity, to a self-diffeomorphism of $X$
which we denote by $$\delta_{(\phi_j,\phi_j')}:X \stackrel{\approx}{\longrightarrow} X.$$

Up to smooth isotopy $\delta_{(\phi_j,\phi_j')}\in \textrm{Diff}(X)$ depends only on the
smooth isotopy class of the embedding $\phi_j$ and the bundle isomorphism $\phi_j'$.

\begin{definition} [\cite{Kas}] 
$\delta_{(\phi_j,\phi_j')}$ is called the \emph{right-handed Dehn twist with center} $(\phi_j,\phi_j')$.
\end{definition}

We will make use of the following theorem.

\begin{theorem}[\cite{Kas}, \cite{Deligne_Katz}] \label{thm:Determination_Lefschetz_fibration}
The Lefschetz fibration $\pi: W \rightarrow D^2$ is uniquely determined by a sequence of vanishing cycles $(\phi_1, \phi_2,..., \phi_{\mu})$ and a sequence of their normalizations $(\phi^{\prime}_1,
\phi^{\prime}_2,..., \phi^{\prime}_{\mu})$. The monodromy of the fibration is equal to $$\delta_{\mu} \circ \cdots \circ \delta_2 \circ \delta_1 \in \emph{Diff}(X)$$ where $\delta_j=\delta_{(\phi_j,\phi_j')}$ is the right-handed Dehn twist with center $(\phi_j,\phi_j')$.
\end{theorem}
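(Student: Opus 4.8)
The plan is to read off both assertions---uniqueness and the monodromy formula---from the handle decomposition recalled above together with the local computation that the model fibration $\Pi$ has monodromy $\delta$. For uniqueness, recall that a normalized $\pi$ is obtained from $X \times D^2$ by successively attaching the $\mu$ handles of index $n+1$, so the fibered total space $W$ is recovered from the attaching data of these handles. By the first Fact the attaching sphere of the $j$-th handle may be taken inside a single regular fiber, hence is described by the embedding $\phi_j : S^n \to X$; by the second Fact its framing $\Phi'_j$ is pinned down by the normalization $\phi'_j$ through the displayed composition $\epsilon^{n+1} \to \tau \oplus \epsilon \to \nu_1 \oplus \epsilon \to \nu$. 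Since the diffeomorphism type of a (framed) handle attachment depends only on the isotopy class of its attaching map, the ordered sequence $((\phi_1,\phi'_1),\dots,(\phi_\mu,\phi'_\mu))$ determines $\pi$, and conversely any such sequence produces a normalized Lefschetz fibration; this gives the asserted one-to-one correspondence.

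To compute the monodromy I would fix a regular base point $z_0 \in \partial D^2$ and, using that the critical values are the $\mu$-th roots of unity, choose simple loops $\gamma_1,\dots,\gamma_\mu$ based at $z_0$, each encircling exactly one $\lambda_j$, whose ordered concatenation is homotopic in $D^2 \setminus \{\lambda_1,\dots,\lambda_\mu\}$ to $\partial D^2$. Parallel transport along $\gamma_j$ is a local monodromy supported near $\lambda_j$; after identifying a neighborhood of the critical point over $\lambda_j$ with the standard model via $(\phi_j,\phi'_j)$, this local monodromy becomes the right-handed Dehn twist $\delta_{(\phi_j,\phi'_j)} = \delta_j$, using the fact recalled earlier that the monodromy of $\Pi$ is $\delta$ along its vanishing cycle. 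Because monodromy turns concatenation of loops into composition of mapping classes, the total monodromy around $\partial D^2$ is the ordered product $\delta_\mu \circ \cdots \circ \delta_1$, as claimed.

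The main obstacle I anticipate is the bookkeeping in this last identification. One must verify that the normalization $\phi'_j$ carries the geometric Dehn twist of the local model to precisely $\delta_{(\phi_j,\phi'_j)}$ with the correct handedness, so that no factor is inadvertently replaced by its inverse, and that the chosen ordering and orientations of the loops $\gamma_j$ produce the composition in the stated order rather than its reverse. These are orientation-convention matters rather than deep difficulties, but they are exactly where the framing data $\phi'_j$ and the positivity of the Lefschetz critical points enter, so I would treat them with care instead of as routine.
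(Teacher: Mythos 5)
The paper offers no proof of this statement: it is imported verbatim from \cite{Kas} and \cite{Deligne_Katz} (note the citations in the theorem header), so there is no in-paper argument to compare against. Your sketch---recovering the fibration from the handle decomposition with attaching data given by the normalized vanishing cycles, and computing the monodromy as the ordered composition of local Picard--Lefschetz twists obtained by parallel transport around loops encircling the critical values---is precisely the argument of those cited sources, so it is correct and takes essentially the same approach as the proof the paper relies on.
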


\begin{remark} \label{rmk:Dehn_twist_Symplectomorphism} Recall the right-handed Dehn twist $\delta:\mathcal{D}(T^*S^n) \to \mathcal{D}(T^*S^n)$ given explicitly before. With respect to the coordinates $(\textbf{q}, \textbf{p})$ on $\mathbb{R}^{2(n+1)}$ consider the canonical $1$-form $\lambda_{can} = \textbf{p}\cdot d\textbf{q}$ on $\mathcal{D}(T^*S^n) \subset \mathbb{R}^{2(n+1)}$. Then one can compute $$\delta^* \lambda_{can} = \lambda_{can} + |\textbf{p}| d(g(|\textbf{p}|))$$
which implies that the difference $\delta^* \lambda_{can}-\lambda_{can}$ is exact. Therefore, $\delta$ is a symplectomorphism of the symplectic manifold $(\mathcal{D}(T^*S^n),d\lambda_{can})$. As a result, if a regular fiber $X$ of a Lefschetz fibration $\pi: W \rightarrow D^2$ equipped with a symplectic structure $\omega$, then the monodromy $h$ of $\pi$ is a symplectomorphism of $(X,\omega)$. That is, $$h=\delta_{\mu} \circ \cdots \circ \delta_2 \circ \delta_1 \in \textrm{Symp}(X,\omega)$$ where $\delta_j=\delta_{(\phi_j,\phi_j')}$ is the right-handed Dehn twist with center $(\phi_j,\phi_j')$ as in Theorem \ref{thm:Determination_Lefschetz_fibration}.
\end{remark}

\begin{notation} \label{not:Smooth_Lefschetz_Fibration}
For our purposes it is convenient to define a notation for Lefschetz fibrations. Let the quadruple  $(\pi,W,X,h)$ denote the Lefschetz fibration $\pi:W \to D^2$ on $W$ with a regular fiber $X$ and the monodromy $h$. For instance, according to this notation we have $\mathcal{LF}=(\Pi,\mathbb{B}_{\varepsilon}^{2n+2}, \mathcal{D}(T^*S^n),\delta)$.
\end{notation}

For completeness we give the following basic well-known fact as a definition:

\begin{definition} \label{def:Induced_Openbook}
Let $(\pi,W,X,h)$ be any (normalized) Lefschetz fibration. The pairs $(\partial \pi^{-1}(0), \pi|\,_{\partial W})$ and $(X,h)$ are both called the \emph{induced open book} (or sometimes the \emph{boundary open book}) on $\partial W$.
\end{definition}

%==============================================
%==============================================
%==============================================
%==============================================
%==============================================

\section{Positive stabilization of Lefschetz fibrations} \label{sec:Positive stabilization of Lefschetz fibrations}
Now we define a process on Lefschetz fibrations as a counterpart of positive stabilization on open books. We will use Weinstein handles introduced in \cite{Weinstein}. Using the symplectization model near convex boundaries, these handles can be glued to symplectic manifolds along isotropic spheres to obtain new ones, and they give elementary symplectic cobordisms between contact manifolds. We will briefly explain them later.

\begin{definition} \label{def:Stabilization_Lefschetz_Fibration}
Let $(\pi,W,X,h)$ be a Lefschetz fibration which induces a contact open book on $\partial W$. Suppose that $L \subset (X,\omega)$ is a properly embedded Lagrangian $n$-ball with a Legendrian boundary $\partial L \subset \partial X$ on a page of the induced open book. Then the \emph{positive stabilization} $\mathcal{S_{LF}}[(\pi,W,X,h);L]$ of $(\pi,W,X,h)$ \emph{along} $L$ is a Lefschetz fibration $(\pi',W',X',h')$ described as follows:\\

\begin{itemize}
\item[(I)] $X'$ is obtained from $X$ by attaching a Weinstein $n$-handle
$H=D^n\times D^n$ along the Legendrian sphere $\partial L \subset \partial X$.\\
\item[(II)] $h'=\delta_{(\phi,\phi')} \circ h$ where $\delta_{(\phi,\phi')}$ is the
right-handed Dehn twist with center $(\phi,\phi')$ defined as follows: $\phi(S^n)$
is the Lagrangian $n$-sphere $S=D^n \times \{0\} \cup_{\partial L} L$ in the
symplectic manifold $(X'=X \cup H, \omega')$ where $\omega'$ is obtained by
gluing $\omega$ and standard symplectic form on $H$. If $\nu_1$ denote the normal bundle of $S$
in $X'$, then the normalization $\phi': \tau \rightarrow \nu_1$ is given by the
bundle isomorphisms $$\tau \underset{\phi_*}{\stackrel{\cong}{\longrightarrow}} TS \stackrel{\cong}{\longrightarrow} TX'/TS = \nu_1.$$
\end{itemize}
\end{definition}

\begin{remark} $W'$ is, indeed, diffeomorphic to $W$ (see the proof of Theorem \ref{thm:openbooks_Lefs.fibs} below). Also in $h'=\delta_{(\phi,\phi')} \circ h$, we think of $h$ as an element in $\textrm{Diff}\,(X')$ by trivially extending over $H$. Moreover, the isomorphism $TS \rightarrow TX'/TS$ exists because $S$ is Lagrangian in $(X',\omega')$ (the core $D^n \times \{0\}$ of $H$ is Lagrangian). Finally, note that there is a strong analogy between $\mathcal{S_{OB}}[(X,h);L]$ and $\mathcal{S_{LF}}[(\pi,W,X,h);L]$. On the one hand, we have $$\mathcal{S_{OB}}[(X,h);L]=(\mathcal{P}(X,\mathcal{D}(T^*S^n);L,\textbf{D}), \delta \circ h)$$
which means that we are plumbing the open book $(X,h)$ on a given manifold $M$ with the open book $\mathcal{OB}=(\mathcal{D}(T^*S^n),\delta)$ on $\mathbb{S}_{\varepsilon}^{2n+1}$. Therefore, $\mathcal{S_{OB}}[(X,h);L]$ is an open book on the connected sum $M \#\, \mathbb{S}_{\varepsilon}^{2n+1} \approx M$.  On the other hand, we may regard $\mathcal{S_{LF}}[(\pi,W,\omega,\chi,X,h);L]$ as the result of (informally speaking) ``Lefschetz plumbing'' of $(\pi,W,X,h)$ with $\mathcal{LF}$. Indeed, one can see that $\mathcal{S_{LF}}[(\pi,W,X,h);L]$ is a Lefschetz fibration on the boundary connect sum $W \#_{b}\, \mathbb{B}_{\varepsilon}^{2n+2} \approx W$.
\end{remark}

We are now ready to prove

\begin{theorem} \label{thm:openbooks_Lefs.fibs}
Any positive stabilization $\mathcal{S_{LF}}[(\pi,W,X,h);L]$ of a Lefschetz fibration $(\pi,W,X,h)$ with a contact boundary open book induces the open book $\mathcal{S_{OB}}[(X,h);L]$.  Conversely, if a contact open book $(X,h)$ is induced by a Lefschetz fibration $(\pi,W,X,h)$, then any positive stabilization $\mathcal{S_{OB}}[(X,h);L]$ of $(X,h)$ is induced by $\mathcal{S_{LF}}[(\pi,W,X,h);L]$.
\end{theorem}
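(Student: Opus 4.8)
The plan is to reduce both assertions to the single identity
$$\partial\,\mathcal{S_{LF}}[(\pi,W,X,h);L]=\mathcal{S_{OB}}[(X,h);L],$$
where on the left $\partial$ denotes passage to the boundary open book in the sense of Definition \ref{def:Induced_Openbook}. By that definition the boundary open book of the stabilized fibration $(\pi',W',X',h')$ is exactly the pair $(X',h')$, so it suffices to produce a page symplectomorphism identifying $X'$ with the plumbing $\mathcal{P}(X,\mathcal{D}(T^*S^n);L,\textbf{D})$ under which $h'=\delta_{(\phi,\phi')}\circ h$ is carried to $\delta\circ h$. This one identity gives the forward direction at once, and the converse is merely its rereading: if $(X,h)$ is the boundary open book of $(\pi,W,X,h)$, the displayed equation says precisely that $\mathcal{S_{OB}}[(X,h);L]$ is induced by $\mathcal{S_{LF}}[(\pi,W,X,h);L]$.

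First I would check that $\mathcal{S_{LF}}[(\pi,W,X,h);L]$ is a genuine normalized Lefschetz fibration. Writing $h=\delta_{(\phi_\mu,\phi_\mu')}\circ\cdots\circ\delta_{(\phi_1,\phi_1')}$ via Theorem \ref{thm:Determination_Lefschetz_fibration}, the stabilization enlarges the fibre to $X'=X\cup H$ and appends one further normalized vanishing cycle $(\phi,\phi')$, where $\phi(S^n)=S=D^n\times\{0\}\cup_{\partial L}L$ is Lagrangian in $(X',\omega')$ and $\phi'$ is the normalization $\tau\xrightarrow{\cong}TS\xrightarrow{\cong}TX'/TS=\nu_1$. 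As $(\phi,\phi')$ is a legitimate normalized vanishing cycle, Theorem \ref{thm:Determination_Lefschetz_fibration} furnishes a unique fibration $(\pi',W',X',h')$ with $h'=\delta_{(\phi,\phi')}\circ h$, each old $\delta_{(\phi_j,\phi_j')}$ being extended by the identity over $H$. For the accompanying Remark I would also note that the new fibre $n$-handle and the new index-$(n+1)$ critical handle of $W'$ meet in a single transverse point and thus form a cancelling pair, so $W'\approx W\#_b\,\mathbb{B}_\varepsilon^{2n+2}\approx W$; this is not needed for the boundary computation.

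The heart of the argument is the page identification. Here I would invoke the standard local model of $\mathcal{D}(T^*S^n)$ near a cotangent fibre: writing $S^n$ as a union of two $n$-discs, the complement in $\mathcal{D}(T^*S^n)$ of a Weinstein neighbourhood $N(\textbf{D})\cong T^*D^n$ of a fibre $\textbf{D}$ retracts onto the disc-bundle $\mathcal{D}(T^*D^n)\cong D^n\times D^n$, which is precisely the Weinstein $n$-handle $H$, with the zero section restricting to the core $D^n\times\{0\}$. Gluing this to $X$ along $N(L)\cong(T^*D^n,d\lambda_{can})$ by the coordinate interchange $\textbf{q}\leftrightarrow\textbf{p}$ that defines the plumbing is then exactly the attachment of $H$ to $X$ along the Legendrian $\partial L$ prescribed in part (I) of Definition \ref{def:Stabilization_Lefschetz_Fibration}, and under it the zero section $S^n$ is carried to $S=D^n\times\{0\}\cup_{\partial L}L$. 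Since the generalized Dehn twist is supported in a Weinstein neighbourhood of the sphere along which it is taken, and $\phi'$ is the canonical normalization of the zero section, the twist $\delta_{(\phi,\phi')}$ along $S$ matches the monodromy twist $\delta$ along $S^n$; moreover the trivial extension of $h$ over $H$ coincides with the extension $\tilde h$ used in the plumbing. Hence $h'=\delta_{(\phi,\phi')}\circ h$ corresponds to $\delta\circ h$, completing the identity.

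The main obstacle I anticipate is making this identification symplectic rather than merely smooth: one must verify that the Liouville structure produced by Weinstein handle attachment agrees with the cotangent structure on the plumbed-in copy of $\mathcal{D}(T^*S^n)$, so that the two twists genuinely coincide as symplectomorphisms and not just up to smooth isotopy. This is exactly where the computation $\delta^*\lambda_{can}=\lambda_{can}+|\textbf{p}|\,d(g(|\textbf{p}|))$ from Remark \ref{rmk:Dehn_twist_Symplectomorphism} is essential, as it shows the twist is exact-symplectic and supported near the zero section, so that the gluing of Liouville forms is unobstructed and the supports of $\delta_{(\phi,\phi')}$ and $\delta$ both sit inside the matched Weinstein neighbourhood. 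Some care is also needed to confirm that the coordinate interchange defining the plumbing is the same orientation and framing convention used for Weinstein $n$-handle attachment, so that no sign or framing discrepancy enters $\phi'$.
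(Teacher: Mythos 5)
Your proposal is correct and takes essentially the same approach as the paper: you realize the stabilized fibration via Theorem \ref{thm:Determination_Lefschetz_fibration} with the appended normalized vanishing cycle $(\phi,\phi')$, identify the plumbed page with $X\cup H$ by recognizing $\mathcal{D}(T^*S^n)\setminus N(\textbf{D})\cong \mathcal{D}(T^*D^n)$ as the Weinstein handle attached along $\partial L$, and track the zero section to $S=D^n\times\{0\}\cup_{\partial L}L$ so that $\delta$ coincides with $\delta_{(\phi,\phi')}$, which is exactly the paper's argument. Your only deviation is organizational: you derive both directions from the single boundary identity, whereas the paper runs the same steps in the two orders (and itself notes the converse is just a reordering).
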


\begin{proof} By definition of $\mathcal{S_{LF}}$, the fiber $X'$ is obtained from $X$ by attaching $2n$-dimensional Weinstein $n$-handle $H$ along $\partial L \subset \partial X$ . Since every fiber over $D^2$ is gaining $H$, we are actually attaching a $(2n+2)$-dimensional handle $$H'=H \times D^2=D^n \times D^{n+2}$$ to $W$ along $\partial L \subset \partial W$. Say the resulting manifold is $\widetilde{W}$, that is $\widetilde{W}=W \cup H'$. By extending the monodromy $h$ (but calling it still $h$) trivially over $H$, we get an extended Lefschetz fibration $\widetilde{\pi}: \widetilde{W} \to D^2$ on $\widetilde{W}$, i.e., we get $(\widetilde{\pi}, \widetilde{W},X',h)$. Note that $(\widetilde{\pi}, \widetilde{W},X',h)$ is determined by Theorem \ref{thm:Determination_Lefschetz_fibration}. So far what we explained is the content of Stage (I) in Definition \ref{def:Stabilization_Lefschetz_Fibration}. In Stage (II), composing the monodromy $h$ with $\delta_{(\phi,\phi')}$ corresponds to attaching an $(2n+2)$-dimensional handle $H''$ (so called a ``Lefschetz handle'') with index $n+1$ to $\widetilde{W}$ along the Lagrangian sphere $S$ in the fiber $(X',\omega')$ of $(\widetilde{\pi}, \widetilde{W},X',h)$. By Theorem \ref{thm:Determination_Lefschetz_fibration}, we know that $(\widetilde{\pi}, \widetilde{W},X',h)$ extends over the handle $H''$ and we get the Lefschetz fibration $\mathcal{S_{LF}}[(\pi,W,X,h);L]=(\pi',W',X',h')$ on the resulting manifold $$W'=\widetilde{W} \cup H''=W \cup H' \cup H''.$$ We immediately see that $\{H',H''\}$ form a canceling pair in the smooth category as the attaching sphere of $H''$ intersects the belt sphere of $H'$ transversely once, and so $W'$ is  diffeomorphic to the original manifold $W$ (indeed $W'=W \#_{b} \mathbb{B}_{\varepsilon}^{2n+2}$). Therefore, the open book $(X',h')$ induced by $\mathcal{S_{LF}}[(\pi,W,X,h);L]$ is an open on the original boundary $\partial W$. Next we need to see that $(X',h')$ is indeed isomorphic (as an abstract open book) to $\mathcal{S_{OB}}[(X,h);L]$. To this end, first observe that in the plumbing $(\mathcal{P}(X,\mathcal{D}(T^*S^n);L,\textbf{D}), \delta \circ h)$ we are embedding a tubular neighborhood $N\mathbf{(D)}$ of $\textbf{D}$ in $\mathcal{D}(T^*S^n)$ into the page $X$ in such a way that the intersection $N\mathbf{(D)} \cap \partial X$ is a tubular neighborhood of the Legendrian sphere $\partial L(\approx S^{n-1})$. Considering $\partial L$ as the equator of the zero section $S^n\times \{\textbf{0}\}\subset \mathcal{D}(T^*S^n)$, it is clear that the part  $\mathcal{D}(T^*S^n) \setminus N\mathbf{(D)}$ of $\mathcal{D}(T^*S^n)$ which is not mapped into $X$ (during the plumbing) is the trivial bundle $\mathcal{D}(T^*D^n)\cong D^n \times D^n$. Note that the canonical symplectic structure on $\mathcal{D}(T^*S^n)$ restricts to the standard symplectic structure on $\mathcal{D}(T^*D^n)$ which implies that $\mathcal{D}(T^*S^n) \setminus N\mathbf{(D)}$ is the Weinstein handle $H$ glued to $X$ along $\partial L$. Hence, the page of the open book $\mathcal{S_{OB}}[(X,h);L]$, that is, the resulting page of the plumbing, is $X'$. Also if we keep track of the vanishing cycle $S^n\times \{\textbf{0}\} \subset \mathcal{D}(T^*S^n)$ in the above discussion, we immediately see that it corresponds to the Lagrangian $n$-sphere $S=C \cup_{\partial L} L$ where $C=D^n \times \{0\}$ is the (Lagrangian) core disk of the Weinstein handle $H$ which means that the right-handed Dehn twist $\delta$ coincides with $\delta_{\phi,\phi'}$ described in Definition \ref{def:Stabilization_Lefschetz_Fibration}. Composing with $h$, we get $\delta \circ h=h'$ Thus, $\mathcal{S_{OB}}[(X,h);L]$ and $(X',h')$ are isomorphic. This proves the first statement.\\

For the second statement we basically follow the same steps in a different order: If $\mathcal{S_{OB}}[(X,h);L]$ is a given stabilization, then by the above discussion we know that the new page is equal to $X'=X \cup H$. By assumption $(X,h)$ is induced from $(\pi,W,X,h)$. So by attaching $H'=H \times D^2$ (thickening of $H$) to $W$, each fiber of $\pi$ gains the handle $H$, and we get $(\widetilde{\pi}, \widetilde{W},X',h)$ on $\widetilde{W}$. Since $\delta=\delta_{(\phi,\phi')}$, $h'=\delta_{(\phi,\phi')} \circ h=\delta \circ h$. Therefore, we have $\mathcal{S_{OB}}[(X,h);L]=(X',h')$. Moreover, composing $\delta$ with $h$ (in the open book level) corresponds to attaching a Lefschetz handle $H''$ to $\widetilde{W}$ whose normalized vanishing cycle is $(\phi,\phi')$. Therefore, we obtain $(\pi',W',X',h')$ on $W'=\widetilde{W} \cup H''(\approx W)$. It is now clear that $\mathcal{S_{OB}}[(X,h);L]$ is induced by $\mathcal{S_{LF}}[(\pi,W,X,h);L]$.
\end{proof}

%==============================================
%==============================================
%==============================================
%==============================================
%==============================================

\section{Exact Open Books} \label{sec:Exact_Open_Books}
We will define exact open books as boundary open books induced by exact Lefschetz fibrations. To this end, recall that a contact manifold $(M,\alpha)$ is called \emph{strongly symplectically filled} by a symplectic manifold $(X,\omega)$ if there exist a Liouville vector field $\chi$ of $\omega$ defined (at least) locally near $\partial X=M$ such that $\chi$ is transverse to $M$ and $\iota_{\chi}\omega=\alpha$. Such a boundary is called \emph{convex}. An \emph{exact symplectic manifold} is a compact manifold $X$ with boundary, together with a symplectic form $\omega$ and a $1$-form $\alpha$ satisfying $\omega=d\alpha$, such that $\alpha|\,_{\partial X}$ is a contact form which makes $\partial X$ convex. In such a case there is a Liouville vector field $\chi$ of $\omega$ such that $\iota_{\chi} \omega=\alpha$. We will write exact symplectic manifolds as triples $(X,\omega,\alpha)$ (or sometimes as quadruples $(X,\omega,\alpha, \chi)$). Also the pair $(\omega,\alpha)$ (or sometimes the triple $(\omega,\alpha, \chi)$) will be called an \emph{exact symplectic structure} on $X$.

Let $\pi:E^{2n+2} \to S$ be a differentiable fiber bundle, denoted by $(\pi,E)$, whose fibers and base
are compact connected manifolds with boundary. The boundary of such an $E$ consists of two parts: The vertical part $\partial_v E;=\pi^{-1}(\partial S)$, and the horizontal part $\partial_h E:=\bigcup_{z \in S} \partial E_z$ where $E_z=\pi^{-1}(z)$ is the fiber over $z \in S$. The following definitions can be found in \cite{S1,Seidel}.

\begin{definition} [\cite{S1,Seidel}] \label{def:Exact_Symplectic_Fibration}
An \emph{exact symplectic fibration} $(\pi,E,\omega,\alpha)$ over a bordered surface $S$ is a differentiable fiber bundle $(\pi,E)$ equipped with a $2$-form $\omega$ and a $1$-form $\alpha$ on $E$, satisfying $\omega=d\alpha$, such that
\begin{itemize}
\item[(i)] each fiber $E_z$ with $\omega_z=\omega|_{E_z}$ and $\alpha_z=\alpha|_{E_z}$ is an exact symplectic manifold,
\item[(ii)] the following triviality condition near $\partial_h E$ is satisfied: Choose a point $z \in S$ and consider the trivial fibration $\tilde{\pi} : \tilde{E}:= S \times E_z \to S$ with the forms $\tilde{\omega}, \tilde{\alpha}$ which are pullbacks of  $\omega_z,\alpha_z$, respectively. Then there should be a fiber-preserving diffeomorphism $\Upsilon:N \to \tilde{N}$ between neighborhoods $N$ of $\partial_h E$ in $E$ and $\tilde{N}$ of $\partial_h \tilde{E}$ in $\tilde{E}$ which maps $\partial_h E$ to $\partial_h \tilde{E}$, equals the identity on $N \cap E_z$, and $\Upsilon ^*\tilde{\omega}=\omega$ and $\Upsilon ^* \tilde{\alpha}=\alpha$.
\end{itemize}
\end{definition}

\begin{definition} [\cite{S1,Seidel}] \label{def:Exact_Lefschetz_Fibration}
An \emph{exact Lefschetz fibration} is a tuple $(\pi,E,S,\omega,\alpha,J_0,j_0)$ which satisfies the following conditions:
\begin{itemize}
\item[(i)] $\pi:E \to S$ is allowed to have finitely many critical points all of which lie in the interior of $E$.
\item[(ii)] $\pi$ is injective on the set $C$ of its critical points.
\item[(iii)] $J_0$ is an integrable complex structure defined in a neighborhood of $C$ in $E$ such that $\omega$ is a K\"ahler form for $J_0$.
\item[(iv)] $j_0$ is a positively oriented complex structure on a neighborhood of the set $\pi(C)$ in $S$ of the critical values.
\item[(v)] $\pi$ is $(J_0,j_0)$-holomorphic near $C$.
\item[(vi)] The Hessian of $\pi$ at any critical point is nondegenerate as a complex quadratic form, in other words, $\pi$ has nondegenerate complex second derivative at each its critical point.
\item[(vii)]  $(\pi,E\setminus \pi^{-1}(\pi(C)),\omega,\alpha)$ is an exact symplectic fibration over $S \setminus \pi(C)$.
\end{itemize}
\end{definition}

\begin{remark} \label{rem:Exact_Lefschetz_Fibration}
As pointed out in \cite{Seidel}, one can find an almost complex structure $J$ on $E$ agreeing with $J_0$ near $C$ and a positively oriented complex structure $j$ on $S$ agreeing with $j_0$ near $\pi(C)$ such that $\pi$ is $(J,j)$-holomorphic and $\omega(\cdot,J \cdot)|_{\textrm{Ker}\,(\pi_*)}$ is symmetric and positive definite everywhere. The existence of $(J,j)$ is guaranteed by the fact that the space of such pairs $(J,j)$ is always contractible, and in particular, always nonempty. Furthermore, once we fixed $(J,j)$, we can modify $\omega$ by adding a positive $2$-form on $S$ so that it becomes symplectic and tames $J$ everywhere on $E$. Let $\Omega=\omega + \pi^*(\omega_S)$ be such a modification of $\omega$ where $\omega_S$ is a volume form on $S$ which is sufficiently positive. Since any volume form on a bordered surface is exact, $\omega_S=d\alpha_S$ for some $1$-form $\alpha_S$ on $S$, and so $\Omega=d\Lambda$ where $\Lambda=\alpha+\pi^*(\alpha_S).$ Therefore, it is natural to ask if  $(\Omega,\Lambda)$ defines an exact symplectic structure on $E$. For that one needs to check that $\Omega$ admits a Liouville vector field defined in a colar neighborhood of $\partial E$ which is transversely pointing out from $\partial E$. This observation suggests the following definition.
\end{remark}

\begin{definition} \label{def:compatible_exact_mfld_fib}
An exact symplectic manifold $(E,\Omega,\Lambda)$ and an exact Lefschetz fibration $(\pi,E,S,\omega,\alpha,J_0,j_0)$ are said to be \emph{compatible} if for a pair $(J,j)$ as in Remark \ref{rem:Exact_Lefschetz_Fibration} there exists a positive volume form $\omega_S$ on $S$ such that  $$\Omega=\omega + \pi^*(\omega_S)$$ and $\Omega$ tames $J$ everywhere on $E$.
\end{definition}

\begin{lemma} \label{lem:Exact_Lefschetz_Fibration}
For any compatible exact Lefschetz fibration $(\pi,E,S,\omega,\alpha,J_0,j_0)$ on an exact symplectic manifold $(E,\Omega,\Lambda,\chi)$, the following holds
\begin{itemize}
\item[(i)] There exist a neighborhood $N$ of $\partial_h E$ in $E$ on which the Liouville vector field $\chi$ of $\Omega$ has a decomposition $$\chi=V+H$$ where $V,H \in \Gamma(TN)$ such that $V$ is a section of $K:=\emph{Ker}(\pi_*)|_{N} \subset TN$ and $H$ is a section of the $\Omega$-symplectic complement $K^{\Omega}$ of  $K$ in $TN$.
\item[(ii)] Each regular fiber $(E_z,\omega_z,\alpha_z,\chi_z)$ is an exact symplectic submanifold of $(E,\Omega,\Lambda,\chi)$ where $\chi_z \subset \Gamma(T(E_z \cap N))$ is the Liouville vector field of $\omega_z$ transversally pointing out from $\partial E_z$.
\end{itemize}
\end{lemma}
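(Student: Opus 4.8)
The plan is to reduce both assertions to a single algebraic observation: in the splitting $\Omega = \omega + \pi^*(\omega_S)$ of Definition \ref{def:compatible_exact_mfld_fib}, the pullback term $\pi^*(\omega_S)$ annihilates every vertical vector, and likewise $\pi^*(\alpha_S)$ vanishes on fibers. First I would fix the neighborhood $N$. By Definition \ref{def:Exact_Lefschetz_Fibration}(i) every critical point of $\pi$ lies in the interior of $E$, hence away from $\partial_h E \subset \partial E$; consequently $\pi$ is a submersion on some neighborhood $N$ of $\partial_h E$, and there $\omega$ restricts to a nondegenerate form on each vertical tangent space (away from the interior critical points the fibers are exact symplectic, cf. Definition \ref{def:Exact_Symplectic_Fibration}(i)). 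On $N$ the vertical distribution $K = \textrm{Ker}(\pi_*)$ is therefore an honest rank-$2n$ subbundle of $TN$.

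For (i), I would prove that $K$ is a symplectic subbundle of $(TN,\Omega)$. If $u,v \in K_p$ then $\pi_* u = \pi_* v = 0$, so $\pi^*(\omega_S)(u,v) = \omega_S(\pi_* u,\pi_* v) = 0$ and hence $\Omega(u,v) = \omega(u,v) = \omega_{\pi(p)}(u,v)$. Since $\omega_{\pi(p)}$ is nondegenerate on the fiber, $\Omega|_K$ is nondegenerate and $K$ is $\Omega$-symplectic. This yields the $\Omega$-orthogonal decomposition $TN = K \oplus K^\Omega$, and splitting $\chi$ accordingly gives $\chi = V + H$ with $V \in \Gamma(K)$ and $H \in \Gamma(K^\Omega)$, which is exactly (i).

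For (ii), I would first restrict the ambient forms to a regular fiber $E_z$. There $\pi$ is constant, so $\pi^*(\omega_S)|_{E_z} = 0$ and $\pi^*(\alpha_S)|_{E_z} = 0$, whence $\Omega|_{E_z} = \omega|_{E_z} = \omega_z$ and $\Lambda|_{E_z} = \alpha|_{E_z} = \alpha_z$; thus $E_z$ is a symplectic submanifold on which $(\Omega,\Lambda)$ restricts to its intrinsic exact symplectic structure $(\omega_z,\alpha_z)$. To identify the Liouville field I would pair the defining relation $\iota_\chi\Omega = \Lambda$ with an arbitrary vertical vector $v \in K_p$: since $H \in K^\Omega$ forces $\Omega(H,v) = 0$, the computations above give $\omega_z(V,v) = \Omega(V,v) = \Lambda(v) = \alpha_z(v)$. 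As this holds for every vertical $v$ and $\omega_z$ is nondegenerate, $V|_{E_z}$ is the unique Liouville field $\chi_z$ of $(E_z,\omega_z,\alpha_z)$ on $E_z \cap N$. Finally, because each fiber is itself an exact symplectic manifold, its Liouville field is transverse to $\partial E_z$ and points outward, which is the remaining claim.

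I expect the only real obstacle to be conceptual rather than computational: one must keep in mind that the $\Omega$-symplectic complement $K^\Omega$ is generally \emph{not} the base-direction (horizontal) distribution coming from the product model of Definition \ref{def:Exact_Symplectic_Fibration}(ii), yet the argument never needs the two to agree, because $V$ is extracted and identified purely by testing against vertical vectors. The supporting facts — that $\pi$ is a submersion up to $\partial_h E$, that $V$ is automatically fiber-tangent since $V \in K$, and that the fiberwise Liouville field is outward-transverse by convexity of $\partial E_z$ — are all immediate once the pullback terms $\pi^*(\omega_S)$ and $\pi^*(\alpha_S)$ are seen to vanish on $K$.
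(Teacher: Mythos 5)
Your proof is correct, but it follows a genuinely different route from the paper's. The paper constructs the two pieces of the decomposition explicitly: it glues the fiberwise Liouville fields $\chi_z$ into a vertical field $V$ using the triviality condition near $\partial_h E$, lifts a Liouville field $\chi_S$ of $\omega_S$ to a horizontal field $H$ (showing $H \in \Gamma(K^{\Omega})$ via the local normal form of $\omega$ near $\partial_h E$), verifies $\mathcal{L}_{V+H}\Omega = \Omega$, and then identifies $V+H$ with $\chi$; part (ii) then holds essentially ``by construction'' of $V$. You instead observe that $K$ is an $\Omega$-symplectic subbundle (because $\pi^*(\omega_S)$ annihilates vertical vectors and $\omega$ is fiberwise nondegenerate), split $\chi$ by pure linear algebra along $TN = K \oplus K^{\Omega}$, and only afterwards identify the vertical component on a regular fiber by pairing $\iota_{\chi}\Omega = \Lambda$ against vertical vectors and invoking nondegeneracy of $\omega_z$. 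Your route is shorter, needs no Lie-derivative computation, and in fact sidesteps the weakest step of the paper's argument: the paper concludes that $V+H$, being a Liouville field transverse to the boundary, ``must coincide'' with $\chi$, but Liouville vector fields are not unique, so that inference really requires checking $\iota_{V+H}\Omega = \Lambda$, which the paper does not do; since you define $V$ and $H$ as the components of $\chi$ itself, the issue never arises for you. What the paper's construction buys in exchange is stronger geometric information than the lemma states --- $V$ is literally the family of fiberwise Liouville fields on all of $N$ (not just on regular fibers) and $H$ is the lift of a Liouville field on the base. Note finally that both arguments share the same minor imprecision: the fiberwise structure needed near $\partial_h E$ (your nondegeneracy of $\omega$ on $K$, the paper's local triviality) is, per Definition \ref{def:Exact_Lefschetz_Fibration}(vii), only guaranteed away from the singular fibers, so both proofs tacitly use the standard convention that the trivial exact symplectic structure near $\partial_h E$ extends over all of the base.
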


\begin{proof}
By assumption we have $\Omega=\omega + \pi^*(\omega_S)$ where $\omega_S$ is a positive volume form on $S$ (equipped with a complex structure $j$ as above). 

For the first statement,  consider the Liouville vector field $\chi_z$ of $\omega_z$ on a given fiber $E_z$. By the local triviality condition near $\partial_h E$ (i.e., (ii) in Definition \ref{def:Exact_Symplectic_Fibration}), these $\chi_z$'s glue together (smoothly) and gives a Liouville vector field $V \in \Gamma(TN)$ for $\omega$ on some colar neighborhood $N$ of $\partial_h E$. Note that $V \in K$ as being a union of vertical vector fields. 
 
The complex structure $j$ and the positive volume (or symplectic) form $\omega_S$ determine a Riemannian, and so a Stein structure on $S$ (recall $S$ is connected). In particular, $\omega_S$ has a globally defined Liouville vector field $\chi_S$ transversally pointing out from $\partial S$. Since $\pi|_N:N \to S$ is a trivial fibration, we can lift $\chi_S$ to its obvious lift $H \in \Gamma(TN)$. The condition (ii) in Definition \ref{def:Exact_Symplectic_Fibration} also ensures that along the intersection of $N$ with the preimage of any local chart on $S$, the local description of $\omega$ involves only vertical (fiber) coordinates (Lemma 1.1 in \cite{Seidel}). Therefore, $\omega(u,H)=0, \forall u \in K$ which implies that $\Omega(u,H)=0, \forall u \in K$ (as $\pi^*(\omega_S)(u,H)=0, \forall u \in K$). As a result, we conclude that $H \in \Gamma(K^{\Omega})$.

Now for the vector field $V+H$ one easily checks that $$\mathcal{L}_{V+H}\Omega=\mathcal{L}_{V}\omega+\underbrace{\mathcal{L}_{H}\omega}_{=0}+
\underbrace{\mathcal{L}_{V}\pi^*(\omega_S)}_{=0}+\mathcal{L}_{H}\pi^*(\omega_S)=\omega+\pi^*(\omega_S)=\Omega.$$ Since $V+H$ is transversally pointing out from $\partial N \cap \partial E$, we conlude that it must coincide with the Liouville vector field $\chi$ of $\Omega$ restricted to $N$. 

For the second part, first observe that $\Omega|_{E_z}=\omega|_{E_z}=\omega_z$ and $\Lambda|_{E_z}=\alpha|_{E_z}=\alpha_z$. Also from part (i) we know that $\chi=V+H$ where $H$ has no vertical component. Moreover, $V$ was  constructed by taking the unions of $\chi_z$'s. Therefore, when restricted to $E_z \cap N$, the vertical (fiber) component of the Liouville vector field $\chi$ is equal to  $\chi_z$. Hence, each regular fiber $(E_z,\omega_z,\alpha_z,\chi_z)$ is an exact symplectic submanifold of the total space $(E,\Omega,\Lambda,\chi)$.
\end{proof}

\begin{conventionandnotation} \label{not:Exact_Lefschetz_Fibration}
From now on, the base of any exact Lefschetz fibration will be assumed to be the $2$-disk $D^2$. Since they will not be considered in our discussions, we drop $J_0$ and $j_0$ from our notation. As an another convension, any exact Lefschetz fibration will be assumed to be compatible with some exact symplectic structure on its total space. Moreover, we will assume that $\omega=d\alpha$ in Definition \ref{def:Exact_Lefschetz_Fibration}  has been already modified as in Remark \ref{rem:Exact_Lefschetz_Fibration} or in the proof of Lemma \ref{lem:Exact_Lefschetz_Fibration}, and will include its Liouville vector field $\chi$ in the notation. Furthermore, we also want to specify the regular fiber and the monodromy in our notation as before. Therefore, we introduce the following:

Let $(\pi,E,\omega,\alpha,\chi,X,h)$ denote a compatible exact Lefschetz fibration over the disk $D^2$ with the following properties:
\begin{itemize}
\item[(i)] The underlying smooth Lefschetz fibration is $(\pi,E,X,h)$ with $h \in \textrm{Symp}(X,\omega_X)$.
\item[(ii)] $(E,\omega,\alpha,\chi)$ is an exact symplectic manifold with convex boundary $(\partial E,\alpha|_{\partial E})$.
\item[(iii)] Each regular fiber $(E_z,\omega_z,\alpha_z,\chi_z)$ is an exact symplectic submanifold of $(E,\omega,\alpha,\chi)$.
\end{itemize}
Note that any compatible exact Lefschetz fibration over the disk $D^2$ admits such representation.
\end{conventionandnotation}

\begin{definition} \label{def:Exact_Open_Book}
If an open book is induced by a compatible exact Lefschetz fibration, then it is said to be an \emph{exact open book}.
\end{definition}

\begin{theorem} \label{thm:Exact_Openbooks_Support}
The exact open book $(X,h)$ induced by a compatible exact Lefschetz fibration $(\pi,E,\omega,\alpha,\chi,X,h)$ caries the contact structure $\xi=\emph{Ker}(\alpha|_{\,\partial E})$ on $\partial E$.
\end{theorem}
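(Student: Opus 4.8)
The plan is to verify directly the three conditions of Definition \ref{def:Compatibility} for the abstract open book $(X,h)$ realized on $M=\partial E$, taking as contact form the restriction $\alpha|_{\partial E}$, which is a genuine contact form precisely because $(E,\omega,\alpha,\chi)$ is exact symplectic with convex boundary. First I would fix the geometric picture of the induced open book (Definition \ref{def:Induced_Openbook}): write $\partial E = \partial_v E \cup \partial_h E$, where $\partial_v E = \pi^{-1}(\partial D^2)$ is the mapping torus of $h$ and $\partial_h E = \bigcup_z \partial E_z$ is trivialized as $\partial X \times D^2$ by the triviality condition of Definition \ref{def:Exact_Symplectic_Fibration}(ii). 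The binding is $B = \partial E \cap \pi^{-1}(0) \cong \partial X$, the open book map is $x \mapsto \pi(x)/|\pi(x)|$, and the closure of each page is diffeomorphic to the regular fiber $X$.

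For condition (i), I would note that $B$ sits inside $\partial_h E \cong \partial X \times D^2$, where by the triviality condition $\alpha|_{\partial_h E}$ agrees with the pullback of $\alpha_z|_{\partial E_z}$ from a regular fiber. Since Lemma \ref{lem:Exact_Lefschetz_Fibration}(ii) says each $(E_z,\omega_z,\alpha_z,\chi_z)$ is an exact symplectic submanifold with convex boundary, the form $\alpha_z|_{\partial E_z}$ is contact, and hence so is $\alpha|_{TB}$. For the symplectic part of condition (ii) I would use that on any regular fiber $d(\alpha|_{E_z}) = d\alpha_z = \omega_z$ is symplectic, again by Lemma \ref{lem:Exact_Lefschetz_Fibration}(ii), together with the fact that $\pi^*(\omega_S)$ vanishes on fibers, so that on the fiber part of each page $d\alpha$ restricts to the fiber symplectic form.

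The main obstacle is condition (ii) near the binding. A page of the induced open book is not a single fiber but the fiber $E_{e^{i\theta}}$ over a boundary point glued to the horizontal collar $\partial X \times R_\theta \subset \partial_h E$ lying over the ray $R_\theta$; on that collar the triviality model shows that $\omega = d\alpha$ restricted to the page has the radial direction in its kernel, so the naive restriction degenerates. To repair this I would invoke the Liouville decomposition $\chi = V + H$ of Lemma \ref{lem:Exact_Lefschetz_Fibration}(i), where $H$ is the horizontal lift of the outward base Liouville field $\chi_S$ and is radial near $\partial D^2$. Using the standard binding neighbourhood $\partial X \times D^2$, in which one deforms $\alpha|_{\partial E}$ near $B$ within the fixed contact structure $\xi$ to a model of the form $\alpha_B + r^2\,d\phi$, identifies each page symplectically with $(X,\omega_z)$ and produces a Reeb-type field rotating the base coordinate $\arg\pi$, hence transverse to all pages. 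This is the exact generalization of the role played by $R_0$ in the proof of Lemma \ref{lem:Compatibility}, and I expect this binding-neighbourhood analysis to be the crux of the argument.

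Finally, for the orientation condition (iii), I would argue as in the last part of the proof of Lemma \ref{lem:Compatibility}: the Liouville field $\chi$ is transverse to the convex boundary $\partial E$, so the orientation of a page inherited from $(E,\omega)$ agrees with the one given by $\omega_z^{\wedge n}$; pairing $\chi$ with the base-rotating Reeb field $R$ gives a non-degenerate pair splitting $TE|_{\text{page}} = \langle \chi\rangle \oplus \langle R\rangle \oplus T(\text{page})$, which exhibits the page as a symplectic submanifold and shows that the contact orientation of $B$ induced by $\alpha|_{TB}$ matches its orientation as the boundary of $(\bar X, d\alpha)$. Assembling (i)--(iii) then yields that $\xi = \mathrm{Ker}(\alpha|_{\partial E})$ is carried by $(X,h)$.
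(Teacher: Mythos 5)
Your proposal is correct, and its skeleton---verify the three conditions of Definition \ref{def:Compatibility} for $\alpha|_{\partial E}$, with Lemma \ref{lem:Exact_Lefschetz_Fibration} (regular fibers are exact symplectic submanifolds with convex boundary) as the key input---is the same as the paper's. But you diverge from the paper's proof in two places, and the comparison is worth recording. First, for condition (ii) the paper simply declares that any page of the boundary open book ``equips with the symplectic structure $d\alpha|_X$ as being a regular fiber of $\pi$,'' i.e.\ it identifies each page with the fiber $E_{e^{i\theta}}$ and never mentions the horizontal collar $\partial X \times \{re^{i\theta}\} \subset \partial_h E$ that the honest page of $(\partial \pi^{-1}(0), \pi/|\pi|)$ also contains. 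You correctly flag that on this collar the restriction of $d\alpha$ degenerates (the radial lift lies in its kernel, by the triviality condition of Definition \ref{def:Exact_Symplectic_Fibration}), and you repair this by deforming the form near the binding, within the fixed contact structure $\xi$, to a model $\alpha_B + r^2\,d\phi$; this is legitimate because carrying is a property of $\xi$, not of the particular contact form. So on this point your argument is more careful than the paper's, which takes the standard shortcut of treating pages as fibers. Second, for the orientation condition (iii) you transplant the ambient argument of Lemma \ref{lem:Compatibility}: split $TE|_{\mathrm{page}} = \langle \chi \rangle \oplus \langle R \rangle \oplus T(\mathrm{page})$ using the nondegenerate pair $\{\chi, R\}$ (note $\omega(\chi,R)=\alpha(R)=1$). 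The paper instead argues intrinsically on the closure of a page: it takes the Liouville field $\chi'$ of $\omega|_X$ and the Reeb field $R'$ of the binding form $\alpha|_B$, chooses a symplectic basis $\{u_1,v_1,\dots,u_{n-1},v_{n-1}\}$ of $(\mathrm{Ker}(\alpha|_B), d\alpha)$, and verifies by hand that $\alpha' \wedge (d\alpha')^{\wedge n-1}>0$ on the induced boundary basis $\{R',u_1,v_1,\dots,u_{n-1},v_{n-1}\}$. Your route buys uniformity with the proof of Lemma \ref{lem:Compatibility}, but it relies on having a Reeb-type field transverse to all pages, which in your setup only becomes available after the binding-neighborhood deformation, and you leave the final orientation match as an assertion; the paper's pointwise linear-algebra computation is self-contained and needs no global transversality statement. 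Neither difference is a gap---both routes close---but each carries a burden the other avoids.
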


\begin{proof} We need to show that all three conditions in Definition \ref{def:Compatibility} hold. Assuming $(\pi,E,\omega,\alpha,\chi,X,h)$ is normalized (and so $z_0=(0,0)$ is a regular value), the binding of $(X,h)$ is the boundary of the regular fiber $E_{z_0}=\pi^{-1}(z_0)$. We know that $(\partial E_{z_0},\alpha_{z_0}|_{\partial E_{z_0}})$ is the convex boundary of $(E_{z_0},\omega_{z_0},\alpha_{z_0})$ which is an exact symplectic submanifold of $(E,\omega,\alpha)$.
Since $\alpha_{z_0}|_{\partial E_{z_0}}=(\alpha|_{E_{z_0}})|_{\partial E_{z_0}}=\alpha|_{\partial E_{z_0}}$, we conclude that $\alpha|_{\partial E_{z_0}}$ is a contact form on the binding $\partial E_{z_0}$, so the first condition follows.

For the second one, each regular fiber $E_z$ of $(\pi,E,\omega,\alpha,\chi,X,h)$ is an exact symplectic submanifold of $(E,\omega,\alpha)$ with the symplectic form $\omega_z=\omega|_{E_z}=d\alpha|_{E_z}$. In particular, any page $X$ of the boundary open book $(X,h)$ equips with the symplectic structure $d\alpha|_{X}$ as being a regular fiber of $\pi$.

To check the orientation condition, we need to specify the dimensions. Say $E$ has dimension $2n+2$, and so the page $X$ and the binding $B$ have dimensions $2n$ and $2n-1$, respectively. For simplicity,
write $\alpha'=\alpha|_{B}$ and $\omega'=\omega|_X (=d\alpha|_X)$. Let $R'$ be the Reeb vector field of $\alpha'$ and let $\chi'$ be the Lioville vector field of $\omega'$ pointing out from $B$. To finish the proof, we need to check that at a given point $p \in \partial X=B$ the orientation on $T_p \, B$ given by the form $\alpha'_p \wedge (d\alpha'_p)^{\wedge n-1}$ coincides with the one induced by the orientation on $T_p\,X$ given by the volume form $(\omega'_p)^{\wedge n}$:

Consider the contact structure $\xi':=\textrm{Ker} (\alpha')$ which is a symplectic subbundle (with rank $2n-2$) of $\xi=\textrm{Ker} (\alpha)$, and the decomposition (see \cite{Geiges}, for instance) $$T_p\,B=\langle R'_p \rangle \oplus \xi'_p.$$

From their definitions, we have $\omega'(\chi',R')>0$ which means that $\{\chi',R'\}$ is a nondegenerate pairing with respect to $\omega'$. Also since they are both transverse to $\xi'$, we get the decomposition $$T_p \,X=\langle \chi'_p \rangle \oplus \langle R'_p \rangle \oplus \xi'_p.$$

Choose a symplectic basis $\{u_1,v_1,...,u_{n-1},v_{n-1}\}$ for the symplectic subspace $(\xi'_p,\omega'_p)$ giving the orientation on $\xi'_p$ determined by $(\omega'_p)^{\wedge n-1}$, that is, we have $$(\omega'_p)^{\wedge n-1}(u_1,v_1,...,u_{n-1},v_{n-1})>0.$$ Since $(X,\omega')$ is a symplectic manifold and $\omega'_p(\chi'_p,R'_p)>0$, we get a symplectic basis $$\{\chi'_p\,, R'_p\,, u_1,v_1,...,u_{n-1},v_{n-1}\}$$ for the symplectic space $(T_p\, X,\omega'_p)$ giving the orientation on $T_p\, X$ determined by $(\omega')^{\wedge n}$, equivalently, $(\omega'_p)^{\wedge n}(\chi'_p\,, R'_p\,,u_1,v_1,...,u_{n-1},v_{n-1})>0$. Then the induced orientation on the subspace $T_p\,B \subset T_p\,X$ is determined by the oriented basis $$\{R'_p\,,u_1,v_1,...,u_{n-1},v_{n-1}\}$$ ($\chi'_p$ is outward pointing normal direction at $p\in B=\partial X$). Now, using the fact $\omega'=d\alpha'$, it is not hard to see that $$\alpha'_p \wedge (d\alpha'_p)^{\wedge n-1}(R'_p\,,u_1,v_1,...,u_{n-1},v_{n-1})>0.$$
\end{proof}

%==============================================
%==============================================
%==============================================
%==============================================
%==============================================

\section{Isotropic Setups and Weinstein Handles} \label{sec:Isotropic Setups and Weinstein Handles}

In this section we briefly recall the isotropic setups and Weinstein handles introduced in \cite{Weinstein}. Using them we will continue to study compatible exact Lefschetz fibrations in the next section.

\subsection{Isotropic setups}
Let $(M,\xi=\textrm{Ker} (\alpha))$ be a $(2n+1)$-dimensional contact
manifold. Any subbundle $\eta$ of the symplectic bundle $(\xi,d\alpha)$ has a symplectic orthogonal
$\eta^{\perp'} \subset \xi$. Therefore, if $Y$ is an isotropic submanifold of $M$, then $d\alpha |_Y=0$ (as $\alpha |_Y=0$), and so $$TY \subset (TY)^{\perp'} \subset \xi$$
from which we obtain the quotient bundle,
$$CSN(M,Y) = (TY)^{\perp'} /\,T Y$$
which is called the \emph{conformal symplectic normal bundle} of $Y$. Moreover, if $N(M,Y)$ denotes the normal bundle of $Y$ in $M$, then we have the decomposition

\begin{center}
$N(M,Y) = TM|_Y /\,TY\cong TM|_Y /\,\xi_Y \oplus \,\xi_Y /(TY)^{\perp'} \oplus (TY)^{\perp'}/\,TY$ \\
 $\cong \langle R_Y \rangle \oplus  T^*Y \oplus CSN(M,Y)$
\end{center}

\noindent where $R_Y$ is the Reeb vector-field $R$ of $\alpha$ restricted to $Y$. If we further assume that $Y$ is a sphere, then $\langle R_Y \rangle \oplus  T^*Y$ has a naturally trivialization. Hence, as pointed out in \cite{Weinstein}, any given trivialization of $CSN(M,Y)$ determines a \emph{framing} on $Y$ (that is, the trivialization of the normal bundle $N(M,Y)$), and the latter can be used to perform a surgery on $M$ along $Y$. Moreover, the resulting contact structure on the surgered manifold agrees with that of $M$ away from $Y$. Such an elementary surgery can be achieved also by attaching a Weinstein handle by making use of ``isotropic setups'' which we recall next.

A quintuple of the form $(P,\omega,\chi,M,Y)$ is called an \emph{isotropic setup} if $(P,\omega)$ is a symplectic manifold, $\chi$ is a Liouville vector field, $M$ is a hypersurface transverse to $\chi$ (hence a contact manifold), and $Y$ is an isotropic submanifold of $M$. The following proposition is the basic tool enabling us to attach Weinstein handles.

\begin{proposition}[\cite{Weinstein}] \label{prop:isotropic_setup}
Let $(P_1,\omega_1,\chi_1,M_1,Y_1), (P_2,\omega_2,\chi_2,M_2,Y_2)$ be two isotropic setups. Suppose that a given diffomorphism $Y_1\to Y_2$ is covered by a symplectic bundle isomorphism $$CSN(M_1,Y_1) \to CSN(M_2,Y_2).$$ Then there exist neighborhoods $U_j$ of $Y_j$ in $M_j$ and an isomorphism of isotropic setups $$\phi:(U_1,\omega_1|_{U_1},\chi_1|_{U_1},M_1 \cap U_1,Y_1) \to (U_2,\omega_2|_{U_2},\chi_2|_{U_2},M_2 \cap U_2,Y_2)$$ which restricts to the given map $Y_1 \to Y_2$, and induces the given bundle isomorphism.
\end{proposition}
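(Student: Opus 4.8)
The plan is to reduce the proposition to the isotropic neighborhood theorem in contact geometry and then lift the resulting contactomorphism to the symplectizations by means of the Liouville flows. First I would exploit the hypothesis that each $M_j$ is a hypersurface transverse to $\chi_j$: the flow of $\chi_j$ identifies a neighborhood of $M_j$ in $(P_j,\omega_j)$ symplectically with a neighborhood of $M_j\times\{0\}$ in the symplectization $(\R\times M_j,\,d(e^t\alpha_j))$, where $\alpha_j=\iota_{\chi_j}\omega_j|_{M_j}$ and $\chi_j$ corresponds to $\partial_t$. Under this standard identification, an isomorphism of the two isotropic setups is the same datum as a germ (along $Y_1\to Y_2$) of a contactomorphism $\phi_0$ from $(M_1,\xi_1)$ to $(M_2,\xi_2)$ carrying $Y_1$ to $Y_2$, together with its canonical lift. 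Hence it suffices to produce $\phi_0$, which is exactly the isotropic neighborhood theorem.

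For that theorem I would use the decomposition recorded above,
$$N(M,Y)\cong \langle R_Y\rangle \oplus T^*Y \oplus CSN(M,Y).$$
The summand $\langle R_Y\rangle\oplus T^*Y$ is canonically determined by $(M,\xi)$ and $Y$ alone, so the given diffeomorphism $Y_1\to Y_2$ and the given symplectic bundle isomorphism $CSN(M_1,Y_1)\to CSN(M_2,Y_2)$ together induce a bundle isomorphism $N(M_1,Y_1)\to N(M_2,Y_2)$ covering $Y_1\to Y_2$. By the tubular neighborhood theorem this is realized by a diffeomorphism $\psi$ between neighborhoods $V_1$ of $Y_1$ and $V_2$ of $Y_2$. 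The point is that the data we have matched is precisely the data controlling the first-order behavior of the contact form transverse to $Y$, so that $\psi^*\alpha_2$ and $\alpha_1$ agree to first order along $Y_1$ (equal restriction to $TM|_{Y_1}$ and equal exterior derivative there).

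Next I would run a relative Moser (Gray stability) argument. Because $\psi^*\alpha_2$ and $\alpha_1$ have the same $1$-jet along $Y_1$, the interpolation $\alpha_t=(1-t)\alpha_1+t\,\psi^*\alpha_2$ is a family of contact forms on a possibly smaller neighborhood of $Y_1$; solving the associated Moser equation for a time-dependent vector field vanishing along $Y_1$ and integrating its flow yields a contactomorphism fixing $Y_1$, which composed with $\psi$ gives the desired germ $\phi_0$ satisfying $\phi_0^*\alpha_2=e^f\alpha_1$ for a function $f$. Finally I would lift: the map on symplectizations defined by $\Phi(t,x)=(t-f(x),\,\phi_0(x))$ satisfies $\Phi^*(e^t\alpha_2)=e^t\alpha_1$, hence intertwines $d(e^t\alpha_1)$ with $d(e^t\alpha_2)$ and carries $\partial_t$ to $\partial_t$; transporting this back through the Liouville-flow identifications of the first paragraph produces the isomorphism $\phi$ of isotropic setups, restricting to $Y_1\to Y_2$ and inducing the prescribed bundle isomorphism.

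The main obstacle I expect lies in the $1$-jet matching of the second paragraph: one must verify that the splitting of $N(M,Y)$ genuinely captures the full first-order behavior of $\alpha$ transverse to $Y$, so that agreeing on $Y$ and on $CSN(M,Y)$ forces $\psi^*\alpha_2$ and $\alpha_1$ to agree to the order needed for the Moser homotopy to remain within the contact cone and for the correcting vector field to vanish on $Y$. Once this is in place, the remaining steps are standard applications of the tubular neighborhood theorem, Moser's method, and the symplectization model.
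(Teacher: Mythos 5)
The paper offers no proof for you to be compared against: it quotes this proposition verbatim from Weinstein's paper, so your argument must stand on its own. Its architecture is the right one — identify a neighborhood of each $M_j$ in $P_j$ with a neighborhood of $\{0\}\times M_j$ in the symplectization $(\R\times M_j,\,d(e^t\alpha_j))$ via the Liouville flow, prove a neighborhood theorem at the contact level, then lift. Moreover, the obstacle you flag at the end is not the real problem: one does not need full $1$-jet agreement, only that $\psi^*\alpha_2$ and $\alpha_1$, together with their exterior derivatives, agree at points of $Y_1$, and this your tubular-neighborhood map can be made to achieve by the linear symplectic algebra of the splitting $N(M,Y)\cong\langle R_Y\rangle\oplus T^*Y\oplus CSN(M,Y)$; that pointwise agreement already makes the interpolation contact on a shrunken neighborhood and makes the Moser field vanish along $Y_1$.

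The genuine gap is strictness, and it breaks your final lifting step. An isomorphism of isotropic setups must carry $M_1\cap U_1$ onto $M_2\cap U_2$; combined with $\phi^*\omega_2=\omega_1$ and $\phi_*\chi_1=\chi_2$, this forces its restriction to the hypersurface to satisfy $(\phi|_{M_1})^*\alpha_2=\alpha_1$ exactly, where $\alpha_j=\iota_{\chi_j}\omega_j|_{TM_j}$ — i.e.\ the correct contact-level statement involves a \emph{strict} contactomorphism of forms, not a contactomorphism of structures as in your first paragraph. Your Gray-type argument only yields $\phi_0^*\alpha_2=e^f\alpha_1$, and the shear $\Phi(t,x)=(t-f(x),\phi_0(x))$, while it does preserve the symplectic forms and the Liouville fields $\partial_t$, maps the hypersurface $\{0\}\times M_1$ onto the graph of $-f\circ\phi_0^{-1}$ over $M_2$, which is not $\{0\}\times M_2$ unless $f$ vanishes identically near $Y_1$. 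So the map you transport back to the $P_j$ matches the $\omega$'s, the $\chi$'s and the $Y$'s, but not the $M$'s, hence is not an isomorphism of isotropic setups; and the hypersurface matching is exactly what this paper needs when it invokes the proposition to glue the handles $H_z$ to the fibers $E_z$ along their boundaries in the proof of Lemma \ref{lem:H'_replaced_by_H_n}. The repair is to prove the strict Moser statement: solve $\mathcal{L}_{X_t}\alpha_t=-(\psi^*\alpha_2-\alpha_1)$ exactly, writing $X_t=h_tR_t+Z_t$ with $R_t$ the Reeb field of $\alpha_t$ and $Z_t\in\ker\alpha_t$; the component $Z_t$ is determined algebraically by nondegeneracy of $d\alpha_t$ on $\ker\alpha_t$, while $h_t$ is obtained by integrating $dh_t(R_t)=-(\psi^*\alpha_2-\alpha_1)(R_t)$ starting from a hypersurface through $Y_1$ transverse to $R_t$ (such a hypersurface exists because $TY_1\subset\ker\alpha_t$ while $R_t$ is not in $\ker\alpha_t$). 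With $f\equiv0$ the lift is simply $\mathrm{id}_\R\times\phi_0$, and all five pieces of data then correspond.
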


\subsection{Weinstein handles}
Denote the coordinates on $\R^{2n+2}=\R^{2(n+1)}$ by $$(x_0,y_0,x_1,y_1,...,x_{n},y_{n})$$ and consider the standard symplectic structure on $\R^{2n+2}$ as $$\omega_0=\sum_{j=0}^n dx_j \wedge dy_j.$$ We will focus on two special Weinstein handles that we need for the present paper. Namely, let $H_n$ and $H_{n+1}$ be the $(2n+2)$-dimensional Weinstein handles in $\R^{2n+2}$ with indexes $n$ and $n+1$, respectively. These handles are defined as follow: Consider
$$\chi_n=-\dfrac{x_0}{2}\dfrac{\partial}{\partial x_0}-\dfrac{y_0}{2}\dfrac{\partial}{\partial y_0}+\sum_{j=1}^n \left(-2x_j\dfrac{\partial}{\partial x_j}+y_j\dfrac{\partial}{\partial y_j}\right), \quad \chi_{n+1}=\sum_{j=0}^n \left(-2x_j\dfrac{\partial}{\partial x_j}+y_j\dfrac{\partial}{\partial y_j}\right)$$
which are the negative gradient vector fields of the Morse functions
$$f_n=\dfrac{x_0^2}{4}+\dfrac{y_0^2}{4}+\sum_{j=1}^n \left(x_j^2-\dfrac{1}{2}y_j^2 \right), \quad f_{n+1}=\sum_{j=0}^n \left(x_j^2-\dfrac{1}{2}y_j^2 \right)$$
respectively. We have the contractions $\alpha_k=\iota_{\chi_k} \omega_0$, for $k=n,n+1$, given as
$$\alpha_n=-\dfrac{x_0}{2}\,dy_0+\dfrac{y_0}{2}\,dx_0+\sum_{j=1}^n \left(-2x_jdy_j-y_jdx_j\right), \quad \alpha_{n+1}=\sum_{j=0}^n \left(-2x_jdy_j-y_jdx_j\right)$$
from which we compute that $\mathcal{L}_{\chi_k}\omega_0=d(\iota_{\chi_k} \omega_0)=-\omega_0$. Therefore, $\chi_n, \chi_{n+1}$ are both Liouville vector fields of $\omega_0$. Next, consider the unstable manifold $$E_-^k=\{x_0=\cdots=x_n=y_0=\cdots=y_{n-k}=0\},$$ and the hypersurface $X_-=f_k^{-1}(-1)$ which is of contact type. The pull back of $\alpha_k$ on $E_-^k$ is zero, and so the descending sphere $$\mathcal{S}^{k-1}=E_-^k \cap X_-$$ is isotropic (Legendrian if $k=n+1$) in the contact manifold $(X_-,\alpha_k|_{X_-})$. Similarly, we have the stable manifold $E_+^{2n+2-k}=\{y_{n-k+1}=\cdots=y_{n}=0\}$ and the hypersurface $X_+=f_k^{-1}(1)$ intersecting each other along the ascending sphere $$S^{2n+1-k}=E_+^{2n+2-k} \cap X_+$$ which is a submanifold of the contact manifold $(X_+,\alpha_k|_{X_+})$.

\begin{figure}[ht] \label{fig:Weinstein_handle}
\begin{center}
\includegraphics{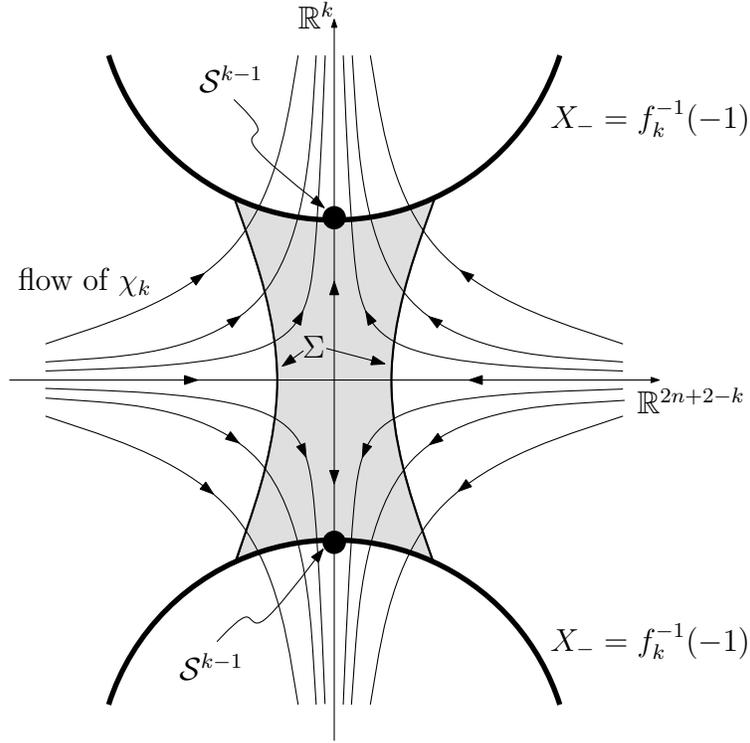}
\caption{Weinstein handle $H_k$ (shaded) and the flow of $\chi_k$ transverse to $\partial H_k$.}
\end{center}
\end{figure}

\vspace{1cm}

The Weinstein handle $H_k$ is the region bounded by a neighborhood (which can be taken arbitrarily small) of the descending sphere $\mathcal{S}^{k-1}$ in $X_-$ together with a connecting manifold $\Sigma \approx S^{2n+1-k} \times D^k$ depicted in Figure 1. It follows (see \cite{Weinstein}) that we can choose $\Sigma$ in such a way that $\chi_k$ is everywhere transverse to the boundary $\partial H_k$. Now we state the main theorem of \cite{Weinstein} which tells us, in particular, when we can attach Weinstein handles and how the symplectic structure extends over the handle.

\begin{theorem}[\cite{Weinstein}] \label{thm:Weinstein_main_theorem}
Let $Y$ be an isotropic sphere in the contact manifold $M$ with a trivialization of $CSN(M,Y)$. Let $M'$ be the manifold obtained from $M$ by elementary surgery along $Y$. Then the elementary cobordism $P$ from $M$ to $M'$ obtained by attaching a Weinstein handle to $M \times [0,1]$ along a neighborhood of $Y$ carries a symplectic structure and a Liouville vector field which is transverse to $M$ and $M'$. The contact structure induced on $M$ is the given one, while that on $M'$ differs from that on $M$ only on the spheres where the surgery takes place.
\end{theorem}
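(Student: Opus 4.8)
The plan is to reduce the statement to the local handle model and then invoke the normalization of isotropic setups (Proposition \ref{prop:isotropic_setup}). First I would equip the trivial cobordism $M \times [0,1]$ with the symplectization structure: writing $t$ for the $[0,1]$-coordinate, the symplectic form $d(e^t\alpha)$ has Liouville field $\partial_t$, which is transverse to both ends $M\times\{0\}$ and $M\times\{1\}$ and induces the contact form $\alpha$ on each slice. In particular $(M\times[0,1],\, d(e^t\alpha),\, \partial_t,\, M\times\{1\},\, Y)$ is an isotropic setup along the upper boundary, and the trivial part of the desired cobordism already carries a Liouville field transverse to its two ends.

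On the model side I would use the standard handle $H_k$ sitting in $(\R^{2n+2},\omega_0)$ with Liouville field $\chi_k$ and the contact-type hypersurface $X_-=f_k^{-1}(-1)$ containing the isotropic descending sphere $\mathcal{S}^{k-1}$, giving the second isotropic setup $(\R^{2n+2},\omega_0,\chi_k,X_-,\mathcal{S}^{k-1})$. From the explicit coordinates one reads off that $CSN(X_-,\mathcal{S}^{k-1})$ is a trivial symplectic bundle with a canonical trivialization. The given diffeomorphism $Y\cong\mathcal{S}^{k-1}$, together with the chosen trivialization of $CSN(M,Y)$ and the standard one of $CSN(X_-,\mathcal{S}^{k-1})$, supplies precisely the covering symplectic bundle isomorphism demanded by Proposition \ref{prop:isotropic_setup}. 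Applying that proposition produces neighborhoods $U$ of $Y$ and $U'$ of $\mathcal{S}^{k-1}$ and an isomorphism of isotropic setups $\phi:U\to U'$ carrying $\omega,\chi,M\cap U,Y$ to $\omega_0,\chi_k,X_-\cap U',\mathcal{S}^{k-1}$.

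With $\phi$ in hand I would glue $H_k$ to $M\times[0,1]$ along $\phi$ over a neighborhood of $Y$ in the upper boundary $M\times\{1\}$. Since $\phi$ intertwines the symplectic forms and the Liouville fields, the collar data $d(e^t\alpha),\partial_t$ patch with $\omega_0,\chi_k$ on the handle to give a global symplectic form and Liouville field on the cobordism $P$. Transversality to the incoming end $M$ is inherited from the collar. The outgoing boundary $M'$ is assembled from the portion of $M$ away from $Y$ together with the connecting piece $\Sigma\approx S^{2n+1-k}\times D^k$ of $\partial H_k$; by the construction of $H_k$ recalled above, $\chi_k$ is everywhere transverse to $\Sigma$, so the extended Liouville field $\chi$ is transverse to all of $M'$, and $M'$ is exactly the elementary surgery on $Y$ with the framing determined by the trivialization. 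Finally, because $\phi$ is supported near $Y$ and the Liouville flow identifies $\ker(\iota_\chi\omega)$ with $\xi$ away from the handle, the contact structure induced on $M$ is the given one and that on $M'$ agrees with it outside the surgery region, differing only on the attaching/belt spheres.

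The main obstacle I anticipate is the explicit model computation showing that $H_k$ with $\chi_k$ realizes the elementary surgery with precisely the framing recorded by the trivialization of $CSN(M,Y)$: one must verify that $\mathcal{S}^{k-1}$ is isotropic in $X_-$ with the claimed conformal symplectic normal data, that the ascending sphere and connecting manifold $\Sigma$ effect the correct surgery, and that $\chi_k$ remains transverse along $\Sigma$ so that the outgoing level is genuinely the contact manifold $M'$. This is where the geometry is concentrated; once it is checked, the remainder is the formal gluing delivered by Proposition \ref{prop:isotropic_setup}.
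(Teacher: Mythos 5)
The paper offers no proof of this statement at all: it is quoted directly from Weinstein's paper \cite{Weinstein} (Theorem \ref{thm:Weinstein_main_theorem}) and used as a black box, so there is no internal argument to compare against. Your proposal reconstructs the standard proof — symplectization collar $\bigl(M\times[0,1],\,d(e^t\alpha),\,\partial_t\bigr)$, the explicit model handle, and the normalization of isotropic setups (Proposition \ref{prop:isotropic_setup}) to perform the gluing — and this architecture is the correct one; the residual model computations you flag (triviality of $CSN(X_-,\mathcal{S}^{k-1})$, the framing recorded by the trivialization of $CSN(M,Y)$, transversality of the Liouville field along $\Sigma$) are indeed where the content of Weinstein's argument lives.

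There is, however, one step that fails as literally written: you cannot patch $\bigl(d(e^t\alpha),\partial_t\bigr)$ with $\bigl(\omega_0,\chi_k\bigr)$ by an isomorphism of isotropic setups. In this paper's conventions the handle field is the negative gradient of $f_k$ and \emph{contracts} the symplectic form, $\mathcal{L}_{\chi_k}\omega_0=-\omega_0$, whereas $\mathcal{L}_{\partial_t}\,d(e^t\alpha)=d(e^t\alpha)$. An isomorphism of isotropic setups intertwines both the symplectic forms and the Liouville fields, hence preserves the sign of $\mathcal{L}_\chi\omega$; no such isomorphism between an expanding setup and a contracting one can exist, so Proposition \ref{prop:isotropic_setup} cannot be invoked as you state it. The repair is to attach the handle using $-\chi_k$: this is precisely the point of Remark \ref{rmk:Reversing_Louville_vec_field}, which says the Liouville direction of the Weinstein handle must be reversed when gluing to a convex boundary. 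With $-\chi_k$ the field enters the handle through the attaching region (a neighborhood of $\mathcal{S}^{k-1}$ in $X_-$), continuing the flow of $\partial_t$ across the gluing locus, and exits through the connecting manifold $\Sigma$; this is what gives transversality along the outgoing end $M'$ and identifies the induced contact structures away from the surgery spheres. With that sign corrected, your outline goes through.
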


One important fact about gluing symplectic manifolds is not mentioned rigorously before (at least in \cite{Weinstein}). For our purposes it is convenient to state it as a lemma:

\begin{lemma} \label{lem:glueing_exact_forms}
Gluing two exact symplectic manifolds using an isomorphism of isotropic setups results in an exact symplectic manifold.
\end{lemma}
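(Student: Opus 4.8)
The plan is to isolate what is genuinely new here. Theorem \ref{thm:Weinstein_main_theorem} already produces a global symplectic form on the glued manifold together with a global Liouville vector field transverse to its boundary, so the only things left to verify are \emph{exactness} and the fact that a \emph{single} primitive restricts to a contact form on the entire new boundary. I would phrase everything in terms of the Liouville vector field, since for an exact symplectic manifold the primitive is recovered from $\omega$ and $\chi$ via $\alpha = \iota_\chi\omega$.

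First I would fix the data. Write the two pieces as $(X_1,\omega_1,\alpha_1,\chi_1)$ and $(X_2,\omega_2,\alpha_2,\chi_2)$ with $\iota_{\chi_i}\omega_i = \alpha_i$, and let the gluing be carried out by an isomorphism of isotropic setups $\phi$ as in Proposition \ref{prop:isotropic_setup}, identifying neighborhoods $U_1$ and $U_2$ of the relevant isotropic spheres. By definition, an isomorphism of isotropic setups matches all five pieces of data; in particular $\phi^*\omega_2 = \omega_1$ and $\phi$ conjugates $\chi_1$ to $\chi_2$.

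The key step — and the point that the comment preceding the lemma flags as not previously made rigorous — is that matching both $\omega$ and $\chi$ already forces the primitives to match, so no separate adjustment by a closed $1$-form is needed. Concretely, on the overlap one computes
\begin{equation*}
\phi^*\alpha_2 = \phi^*(\iota_{\chi_2}\omega_2) = \iota_{\chi_1}(\phi^*\omega_2) = \iota_{\chi_1}\omega_1 = \alpha_1 ,
\end{equation*}
using only that $\phi$ pushes $\chi_1$ to $\chi_2$ and pulls $\omega_2$ back to $\omega_1$. Hence $\alpha_1$ and $\alpha_2$ agree under the identification and glue to a single global $1$-form $\alpha$ on the glued manifold $X$, which is precisely $\iota_\chi\omega$ for the glued Liouville field $\chi$ furnished by Theorem \ref{thm:Weinstein_main_theorem}.

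To finish, I would observe that $d\alpha = \omega$ holds on each piece and the two descriptions agree on the overlap, so $d\alpha = \omega$ globally and $\omega$ is exact. For the boundary condition, $\partial X$ decomposes into the untouched portions of $\partial X_1$ and $\partial X_2$, where $\alpha|_{\partial}$ is already a contact form making the boundary convex, together with the reconstructed portion produced by the handle attachment, along which Theorem \ref{thm:Weinstein_main_theorem} guarantees that $\chi$ is transverse; since $\alpha = \iota_\chi\omega$ with $\chi$ transverse to all of $\partial X$, the restriction $\alpha|_{\partial X}$ is a contact form and $\partial X$ is convex. Thus $(X,\omega,\alpha)$ is an exact symplectic manifold. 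The only real subtlety I anticipate is the bookkeeping at the corner where the handle region meets the old boundary — smoothing that corner and checking that transversality of $\chi$ persists there — but this is covered by the collar/neighborhood statement of Proposition \ref{prop:isotropic_setup} and the explicit handle model of Section \ref{sec:Isotropic Setups and Weinstein Handles}, rather than requiring any new argument.
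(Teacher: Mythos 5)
Your proof is correct and follows essentially the same route as the paper's: both rest on the single observation that an isomorphism of isotropic setups matches $\omega$ and $\chi$ on the gluing region, hence automatically matches the primitives $\alpha = \iota_{\chi}\omega$, so the exact symplectic data glue to a global structure. Your write-up is in fact more explicit than the paper's, which compresses the pullback computation $\phi^*\alpha_2=\alpha_1$ into a parenthetical ``(and so $\alpha_i$'s)'' and does not separately verify convexity of the glued boundary, both of which you spell out.
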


\begin{proof}
Let $(P_1,\omega_1,\alpha_1)$ and $(P_2,\omega_2,\alpha_2)$ be two exact symplectic manifold, and suppose that (as in Proposition \ref{prop:isotropic_setup}) there exists an isomorphism of isotropic setups
$$\phi:(U_1,\omega_1|_{U_1},\chi_1|_{U_1},M_1 \cap U_1,Y_1) \to (U_2,\omega_2|_{U_2},\chi_2|_{U_2},M_2 \cap U_2,Y_2)$$ which restricts to a given map $Y_1 \to Y_2$, and induces a given bundle isomorphism $$CSN(M_1,Y_1) \to CSN(M_2,Y_2).$$
Let $P$ be the manifold obtained by gluing $(P_1,\omega_1,\alpha_1)$ and $(P_2,\omega_2,\alpha_2)$ using the isomorphism $\phi$. This exactly means that along the gluing region we are gluing $\omega_i$'s, $\chi_i$'s (and so $\alpha_i$'s) together using $\phi$. Therefore, on the gluing region either of $(\omega_1,\alpha_1,\chi_1)$ or $(\omega_2,\alpha_2,\chi_2)$ defines an exact symplectic structure. Observe that on $P \setminus P_2$ (resp. on $P \setminus P_1$) the triple $(\omega_1,\alpha_1,\chi_1)$, (resp. $(\omega_2,\alpha_2,\chi_2)$) defines an exact symplectic structure. Hence, $P$ equips with the exact symplectic structure which we write as $(\omega_1 \cup_{\phi} \omega_2, \alpha_1 \cup_{\phi} \alpha_2, \chi_1 \cup_{\phi} \chi_2)$.
\end{proof}

%==============================================
%==============================================
%==============================================
%==============================================
%==============================================

\section{Convex Stabilizations} \label{sec:Convex Stabilization}

Our observation via isotropic setups and Weinstein handles is the fact that we can perform certain positive stabilizations, which will be called ``convex stabilizations'', in the category of compatible exact Lefschetz fibrations. Convex stabilizations will be defined explicitly at the end of the section where a summary of results and some corollaries are also presented in this new terminology. The main theorem of this section is

\begin{theorem} \label{thm:Legendrian_stabilization_gives_exact_Lefs_fib}
Any positive stabilization of a compatible exact Lefschetz fibration along a properly embedded Legendrian disk is also a compatible exact Lefschetz fibration.
\end{theorem}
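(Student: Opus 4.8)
The plan is to extend the compatible exact symplectic structure $(\Omega,\Lambda,\chi)$ of $E$ over the two handles that assemble the stabilization, using the Weinstein machinery of Section \ref{sec:Isotropic Setups and Weinstein Handles}, and then to verify that the result satisfies all the requirements of Convention \ref{not:Exact_Lefschetz_Fibration} and Definition \ref{def:compatible_exact_mfld_fib}. Recall from the proof of Theorem \ref{thm:openbooks_Lefs.fibs} that $\mathcal{S_{LF}}[(\pi,E,\omega,\alpha,\chi,X,h);L]$ is built in two stages: one first attaches to $E$ the thickened subcritical handle $H'=H\times D^2=D^n\times D^{n+2}$ along $\partial L\subset\partial E$, obtaining the extended fibration $(\widetilde\pi,\widetilde E,X',h)$ with enlarged fiber $X'=X\cup H$, and then attaches the critical Lefschetz handle $H''$ of index $n+1$ along the Lagrangian vanishing cycle $S=C\cup_{\partial L}L$ lying in the fiber $X'$, producing $(\pi',E',X',h')$ with $h'=\delta_{(\phi,\phi')}\circ h$. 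Equivalently, this is the exact Lefschetz plumbing of $(\pi,E,\dots)$ with the standard model $\mathcal{LF}=(\Pi,\mathbb{B}_\varepsilon^{2n+2},\mathcal{D}(T^*S^n),\delta)$, and I would use $\mathcal{LF}$, which is itself a compatible exact Lefschetz fibration carrying the standard Stein data $(\omega_0,\alpha_0,\chi_0)$, as the source of the exact and K\"ahler structure near the new critical point. Observe that $H'$ and $H''$ are precisely the Weinstein handles $H_n$ and $H_{n+1}$ of Section \ref{sec:Isotropic Setups and Weinstein Handles}.

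First I would treat the subcritical handle $H'=H_n$. Since $\partial L\cong S^{n-1}$ is Legendrian in the binding $(\partial X,\alpha|_{\partial X})$, which is a contact submanifold of $(\partial E,\textrm{Ker}(\alpha|_{\partial E}))$, the sphere $\partial L$ is isotropic in $\partial E$, and the Lagrangian disk $L$ together with the core of $H'$ determines the required trivialization of its conformal symplectic normal bundle. Matching a neighborhood of $\partial L$ in $\partial E$ with a neighborhood of the descending sphere of $H_n$ via Proposition \ref{prop:isotropic_setup}, attaching $H'$, and invoking Theorem \ref{thm:Weinstein_main_theorem} together with Lemma \ref{lem:glueing_exact_forms} yields an exact symplectic structure on $\widetilde E$. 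The essential check here is that the product handle $H'=H\times D^2$ is attached in the fiber direction, its core $D^n$ lying in a single fiber, so that the induced attachment on each $E_z$ is exactly the fiberwise Weinstein $n$-handle producing $X'=X\cup H$; consequently the base form $\omega_S$ on $D^2$ is unchanged and the decomposition $\Omega=\omega+\widetilde\pi^*(\omega_S)$ of Definition \ref{def:compatible_exact_mfld_fib} extends across $H'$. This makes $(\widetilde\pi,\widetilde E,\dots,X',h)$ a compatible exact Lefschetz fibration whose regular fibers are the exact symplectic manifolds $X'$.

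Next I would attach the critical Lefschetz handle $H''=H_{n+1}$, which realizes the composition of the monodromy with $\delta_{(\phi,\phi')}$. At the level of the total space this is the critical Weinstein handle $H_{n+1}$ attached along the Legendrian $n$-sphere obtained by pushing the Lagrangian vanishing cycle $S\subset X'$ into a contact hypersurface of $\partial\widetilde E$. The key point is that a neighborhood of the new critical point, together with its integrable $J_0$, its K\"ahler form, and its nondegenerate complex Hessian (conditions (iii)--(vi) of Definition \ref{def:Exact_Lefschetz_Fibration}), is taken verbatim from the holomorphic model $\Pi=P/\varepsilon^2$ of $\mathcal{LF}$. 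Gluing this critical model to $\widetilde E$ by an isomorphism of isotropic setups (Proposition \ref{prop:isotropic_setup}) and applying Lemma \ref{lem:glueing_exact_forms} then produces an exact symplectic structure $(\Omega',\Lambda',\chi')$ on $E'=\widetilde E\cup H''$. Since $h'=\delta_{(\phi,\phi')}\circ h$ is a symplectomorphism of $(X',\omega_{X'})$ by Remark \ref{rmk:Dehn_twist_Symplectomorphism}, each regular fiber of $\pi'$ is an exact symplectic submanifold, so all three conditions of Convention \ref{not:Exact_Lefschetz_Fibration} hold and $(\pi',E',\omega',\alpha',\chi',X',h')$ is a compatible exact Lefschetz fibration.

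I expect the main obstacle to be this last gluing: arranging it so as to respect simultaneously (a) the fiberwise Liouville structure, so that the single globally defined field $\chi'$ still admits the vertical--horizontal splitting of Lemma \ref{lem:Exact_Lefschetz_Fibration}, with its vertical part restricting to the fibers' Liouville fields and pointing transversally outward along $\partial E'$, and (b) the integrable K\"ahler model demanded at the new critical point. Reconciling the Weinstein handle's Liouville data with the holomorphic Lefschetz model is the delicate step, and my strategy for it is precisely to perform the gluing inside $\mathcal{LF}$, where both structures coexist by construction, and then transport them through the isotropic-setup isomorphism, so that the decomposition $\Omega'=\omega'+(\pi')^*(\omega'_S)$ and the taming of the global almost complex structure of Definition \ref{def:compatible_exact_mfld_fib} persist across $H''$.
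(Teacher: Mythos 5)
Your overall skeleton is the same as the paper's: attach the subcritical handle $H'=H_n$ and the critical handle $H''=H_{n+1}$ as Weinstein handles, glue the exact structures via isomorphisms of isotropic setups (Proposition \ref{prop:isotropic_setup}, Theorem \ref{thm:Weinstein_main_theorem}, Lemma \ref{lem:glueing_exact_forms}), and check the fibration data survives. However, there is a genuine gap at the critical stage. Before you can invoke Proposition \ref{prop:isotropic_setup} or Theorem \ref{thm:Weinstein_main_theorem} to attach $H_{n+1}$, you must know that the attaching sphere $S=L\cup_{\partial L}D$ (where $D$ is the core of $H$) is \emph{Legendrian} in the contact boundary $(\partial\widetilde E,\widetilde\alpha|_{\partial\widetilde E})$, not merely Lagrangian in a page; a Lagrangian sphere in a page of a supporting open book is in general not Legendrian in the boundary, and your phrase about ``pushing the Lagrangian vanishing cycle into a contact hypersurface of $\partial\widetilde E$'' asserts exactly the point that needs proof, with no isotopy or realization argument supplied. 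The paper proves this directly: it builds $H_n$ as the product $H\times D^2$ with the split Liouville structure $\chi^0=\chi_H-\tfrac{r}{2}\,\partial/\partial r$, observes (Remark \ref{rmk:Reversing_Louville_vec_field}) that after reversing the Liouville direction in the gluing the region $H\times S^1$ lies in the \emph{convex} part of $\partial\widetilde E$, computes the induced contact form there,
\[
\alpha_\sharp=\frac{d\theta}{2}+\sum_{j=1}^n\left(2x_j\,dy_j+y_j\,dx_j\right),
\]
and checks that it vanishes on the core $D=\{x_1=\cdots=x_n=0,\ \theta=\theta_0\}$; combined with the hypothesis that $L$ is Legendrian (which is precisely where the Legendrian, rather than Lagrangian, assumption of the theorem is consumed), this gives $CSN(\partial\widetilde E,S)=0$ and unlocks the handle attachment. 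Without this computation your second stage cannot start.

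Two further steps are asserted rather than proved in your proposal. First, in stage one you need the fiberwise attachments of $H_z$ to $E_z$ to assemble into a single attachment of $H_n$ to $E$: the paper does this by trivializing $CSN(\partial E,S_{z_0})\cong S_{z_0}\times D^2$ using the trivial normal bundle of the binding (not, as you suggest, a trivialization coming from $L$ and the core), constructing the explicit bundle map $\Psi(p,z)=(f_{z_0}(p),z)$, and checking the resulting global isotropic-setup isomorphism restricts to the fiberwise ones. Second, your final claim that ``each regular fiber of $\pi'$ is an exact symplectic submanifold'' because $h'$ is a symplectomorphism does not by itself show that the glued structure $(\omega',\alpha',\chi')$ restricts correctly on the new fibers; the paper establishes this through the mapping-torus description of $\partial E'$, showing that attaching $H_{n+1}$ changes the monodromy but not the exact structures of the fibers. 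Your idea of importing the integrable local model from $\mathcal{LF}$ near the new critical point is attractive but is left unexecuted at exactly its delicate point (reconciling the Weinstein Liouville data with the holomorphic model); the paper instead sidesteps this by showing the framings used to attach $H''$ and $H_{n+1}$ coincide, so that Theorem \ref{thm:Determination_Lefschetz_fibration} extends the underlying topological Lefschetz fibration over $H_{n+1}$.
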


\begin{proof}
Let $(\pi,E,\omega,\alpha,\chi,X,h)$ be a compatible exact Lefschetz fibration. We have already checked in the proof of Theorem \ref{thm:openbooks_Lefs.fibs} that a positive stabilization $\mathcal{S_{LF}}[(\pi,E,X,h);L]$ of the underlying Lefschetz fibration $(\pi,E,X,h)$ is an another Lefschetz fibration which we denoted by $(\pi',E',X',h')$. So all we need to check is that the exact symplectic structure $(\omega,\alpha,\chi)$ extends over the handles $H',H''$ which we used to construct $(\pi',E',X',h')$ so that we get an exact symplectic structure $(\omega',\alpha',\chi')$ on $E'$.

At this point one should ask why the Legendrian disk $L$ given on a page $X$ of the boundary exact open book (which carries $\xi=\textrm{Ker} (\alpha|\,_{\partial E})$ by Theorem \ref{thm:Exact_Openbooks_Support}) is also Lagrangian on the page $(X,d\alpha)$ (so that $\mathcal{S_{LF}}[(\pi,E,X,h);L]$ makes sense). We can check this as follows: From the basic equality
$$d\alpha(u,v)=\mathcal{L}_u\alpha(v)- \mathcal{L}_v\alpha(u)+ \alpha([u,v])$$
we immediately see that $d\alpha(u,v)=0$ for all $u,v \in TL$ (see Chapter III in \cite{Blair} for a discussion on integrable submanifolds of contact structures). This shows that $L$ is Lagrangian on the page $(X,d\alpha)$.

Consider the $2n$-dimensional Weinstein handle $H$ (of index $n$) used in Definition \ref{def:Stabilization_Lefschetz_Fibration} and in the proof of Theorem \ref{thm:openbooks_Lefs.fibs}. Taking the coordinates on $\R^{2n} \supset H$ as $(x_1,y_1,...,x_{n},y_{n})$, we can symplectically embed $H$ into $H_n$ by the map $$(x_1,y_1,...,x_{n},y_{n}) \to (0,0,x_1,y_1,...,x_{n},y_{n}).$$ Indeed, we can trivially fiber $H_n$ over $D^2$ with fibers diffeomorphic to $H$ by constructing it in a different way as follows: Our new model for $H_n$ will be $H\times D^2$. Consider the standard symplectic form $\omega_H=\sum_{j=1}^n dx_j \wedge dy_j$ on $H$ whose Liouville vector field $\chi_H$, the corresponding Morse function $f_H$ and the contraction $\alpha_H=\iota_{\chi_H}\omega_H$ are $$\chi_H=\sum_{j=1}^n \left(-2x_j\dfrac{\partial}{\partial x_j}+y_j\dfrac{\partial}{\partial y_j}\right), f_H=\sum_{j=1}^n \left(x_j^2-\dfrac{y_j^2}{2} \right), \alpha_H=\sum_{j=1}^n \left(-2x_jdy_j-y_jdx_j\right).$$

Let $(r,\theta)$ be the radial and the angle coordinates on $D^2$-factor in $H\times D^2$. If $pr_1$ (resp. $pr_2$) denotes the projection onto $H$-factor (resp. $D^2$-factor), then, similar to the proof of Lemma \ref{lem:Exact_Lefschetz_Fibration}, the modification $$\omega^0:=pr_1^*(\omega_H)+pr_2^*(rdr \wedge d\theta)$$ is a symplectic form on the total space $H_n=H\times D^2$ of the fibration $pr_2:H_n \to D^2$, and indeed is equivalent to the standard symplectic form $\omega_0$.
Considering $\chi_H$ and $-r/2 \,\partial / \partial r$ as vector fields in $T(H \times D^2)=TH \times TD^2$, it is straightforward to check that $$\chi^0:=\chi_H - r/2 \,\partial / \partial r$$ is the Liouville vector field of $\omega^0$ (satisfying $\mathcal{L}_{\chi^0}\omega^0=-\omega^0$) which gives the contraction $$\alpha^0:=\iota_{\chi^0}\omega^0=\alpha_H -r^2/2 \, d\theta.$$
Note that $\chi_H$ is transverse to $\partial_h H_n=\partial H \times D^2$ and $-r/2 \,\partial / \partial r$ is transverse to $\partial_v H_n=H \times S^1$, and so $\chi^0$ is everywhere transverse to $\partial H_n$. It follows that each fiber $$H_z:=pr_2^{-1}(z)\approx H \quad (z\in D^2)$$ is an $2n$-dimensional Weinstein handle (of index $n$) and equips with the exact symplectic form $\omega^0_z:=\omega^0|_{H_z}$ with the primitive $\alpha^0_z:=\alpha^0|_{H_z}$ and whose Liouville vector field $\chi^0_z:=\chi^0|_{H_z}$ is transverse to $\partial H_z$ and satisfies $\iota_{\chi^0_z} \omega^0_z=\alpha^0_z $.

As a result, we obtain a trivial (no singular fibers) Lefschetz fibration $(pr_2,H_n,H,\textrm{id})$ over $D^2$. One should note that this is not a compatible exact Lefschetz fibration because neither $\partial H_n$ nor $\partial H_z$ is convex, but it can be glued to a compatible exact Lefschetz fibration along the convex part, which we will denote by $\partial^{CX} H_n$, of its boundary to construct a new compatible exact Lefschetz fibration as we will see below. To describe $\partial^{CX} H_n$, we first observe that the boundary of $H$ is decomposed into its convex and concave parts as $$\partial H=\partial^{CX}H \cup \partial^{CV}H$$ where $\partial^{CX}H \approx \mathcal{S}^{n-1}\times D^n$ is the tubular neighborhood of descending sphere $\mathcal{S}^{n-1}$ in the hypersurface $f_H^{-1}(-1)$ from which $\chi_H$ points outward, and $\partial^{CV}H=\Sigma_H \approx S^{n-1}\times D^n$ is the connecting manifold from which $\chi_H$ points inward. Then we get the decomposition
$$\partial H_n=(\partial H \times D^2) \cup (H \times S^1)= (\partial^{CX}H \times D^2)\cup (\partial^{CV}H\times D^2) \cup (H \times S^1)$$
from which we deduce that $$\partial^{CX}H_n=\partial^{CX}H \times D^2  \quad \textrm{and} \quad \partial^{CV}H_n=(\partial^{CV}H \times D^2) \cup (H \times S^1).$$ An easy way to understand this decomposition is given schematically in Figure 2.

\begin{figure}[ht] \label{fig:Schematic_Weinstein_handle}
\begin{center}
\includegraphics{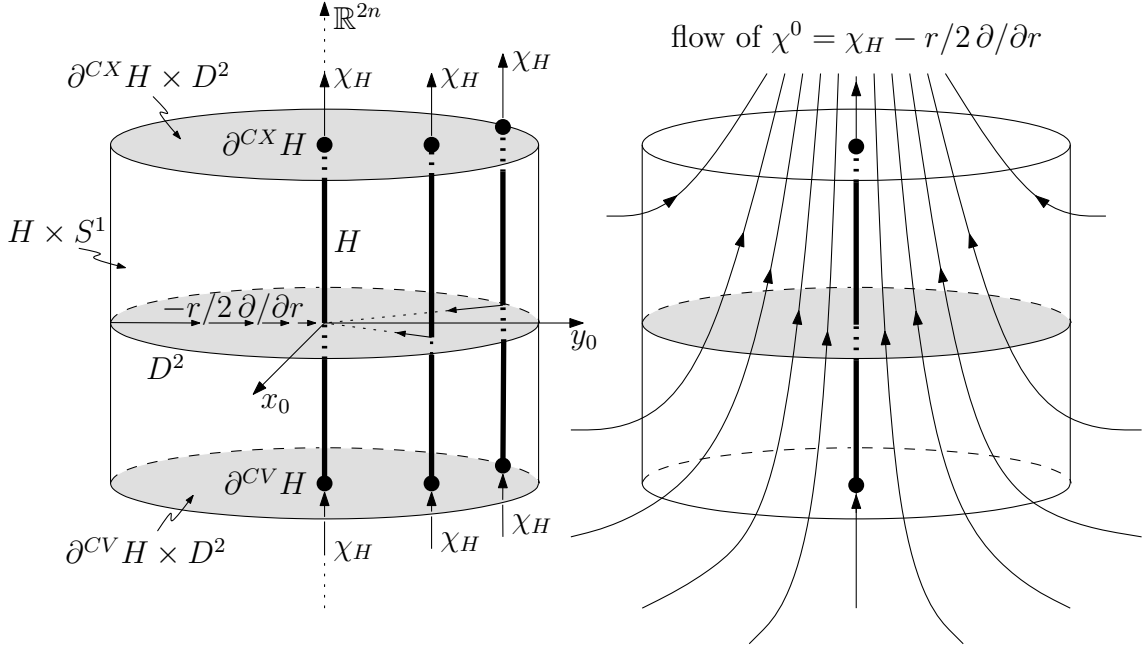}
\caption{A schematic picture of the convex and concave parts of $\partial H_n$ and the flow of $\chi^0=\chi_H -r/2 \,\partial / \partial r$ in $\R^{2n} \times \R^2$}
\end{center}
\end{figure}

\begin{lemma} \label{lem:H'_replaced_by_H_n}
The handle $H'$ can be replaced by the handle $H_{n}$. Moreover, the exact symplectic structure $(\omega,\alpha,\chi)$ on $E$ extends over the handle $H_n$.
\end{lemma}

\begin{proof}
We will replace the handle $H'= H \times D^2$ used in the proof of Theorem \ref{thm:openbooks_Lefs.fibs} with the Weinstein handle $H_n= H \times D^2$. Here we do the replacement in such a way that the new fiber $\tilde{E}_z \approx X'$ over $z\in D^2$ is obtained from the fiber $X=E_z$ by attaching the Weinstein handle $H_z$ along the Legendrian sphere $S_z:=S^{n-1} \subset \partial E_z$ which we consider as a copy of the boundary $\partial L$ of the Legendrian (and so Lagrangian) ball $L$ of the stabilization. More precisely, we proceed as follows:

As $S_z$ is Legendrian in $(\partial E_z,\alpha_z)$, its conformal symplectic normal bundle $CSN(\partial E_z,S_z)$ is zero (i.e., has rank zero). Similarly, the descending sphere $\mathcal{S}_z^{n-1}$ is Legendrian in $(\partial^{CX} H_z,\alpha^0_z)$ and so $CSN(\partial^{CX} H_z,\mathcal{S}_z^{n-1})$ is also zero. Therefore, by Proposition \ref{prop:isotropic_setup}, we can find neighborhoods $U_z$ of $\mathcal{S}_z^{n-1}$ in $H_z$ and $V_z$ of $S_z$ in $E_z$ and an isomorphism of isotropic setups
$$\phi_z:(U_z,\omega^0_z|_{U_z},\chi^0_z|_{U_z},\partial^{CX} H_z \cap U_z,\mathcal{S}_z^{n-1}) \to (V_z,\omega_z|_{V_z},\chi_z|_{V_z},\partial E_z \cap V_z,S_z) $$
which restricts to the map $f_z:\mathcal{S}_z^{n-1} \to S_z$ given in stage (I) of Definition \ref{def:Stabilization_Lefschetz_Fibration}. (Here $f_z$ is the embedding of the attaching sphere of $H_z$.) Now using Theorem \ref{thm:Weinstein_main_theorem} we attach each $H_z$ to corresponding $E_z$ using the isomorphism $\phi_z$ and obtain the new fiber $X'$ equipped with exact symplectic structure $$(\tilde{\omega}_z,\tilde{\alpha}_z,\tilde{\chi}_z):=(\omega_z \cup_{\phi_z} \omega^0_z, \alpha_z \cup_{\phi_z} \alpha^0_z, \chi_z \cup_{\phi_z} \chi^0_z).$$ (Note that $\omega_z \cup_{\phi_z} \omega^0_z=d(\alpha_z \cup_{\phi_z} \alpha^0_z)$ and we use Lemma \ref{lem:glueing_exact_forms} to obtain this structure.)

Next, we fix a copy of $S_{z_0} \subset \partial E_{z_0}$ (with $z_0 \in \textrm{int} D^2$) of the Legendrian sphere $\partial L$ in a fixed regular fiber $E_{z_0}$. Since $z_0$ is a regular value of $\pi$, we may assume that $\partial E_{z_0}$ is the binding of the (exact) open book induced by $\pi$. Since this open book carries the contact structure $\xi=\textrm{Ker}(\alpha)$ on $\partial E$ (which we know by Theorem \ref{thm:Exact_Openbooks_Support}), the binding $(\partial E_{z_0},\xi_{z_0}:=\textrm{Ker}(\alpha_{z_0}))$ is a contact submanifold manifold of $(\partial E,\xi)$, and so $\xi_{z_0}$ is a subbundle of $(\xi|_{\partial E_{z_0}},d\alpha|_{\partial E_{z_0}})$ and we have (see for instance \cite{Geiges}) $$T\partial E|_{\partial E_{z_0}}=T\partial E_{z_0} \oplus (\xi_{z_0})^{\perp'}$$ where $(\xi_{z_0})^{\perp'}=CSN(\partial E,\partial E_{z_0})$ is the symplectically orthogonal complement of $\xi_{z_0}$ in $(\xi|_{\partial E_{z_0}},d\alpha|_{\partial E_{z_0}})$. ($(\xi_{z_0})^{\perp'}$ is also called the conformal symplectic normal bundle of $\partial E_{z_0}$ in $\partial E$.) The latter equality implies that $CSN(\partial E,\partial E_{z_0})$ can be identified with the classical normal bundle $N(\partial E,\partial E_{z_0})$ of $\partial E_{z_0}$ in $\partial E$. But we know, by definition of open books, that the binding has a trivial normal bundle, so $CSN(\partial E,\partial E_{z_0})=\partial E_{z_0} \times D^2$. Then from the inclusions $S_{z_0} \subset \partial E_{z_0}\subset \partial E$ we have $$CSN(\partial E,S_{z_0})=CSN(\partial E,\partial E_{z_0})|_{S_{z_0}} \oplus CSN(\partial E_{z_0},S_{z_0})=S_{z_0} \times D^2.$$
(Recall $CSN(\partial E_{z_0},S_{z_0})$ is zero as $S_{z_0}$ is Legendrian in $\partial E_{z_0}$.)\\

For $\mathcal{S}^{n-1}$, we follow not the same but similar lines: We fix a copy $\mathcal{S}^{n-1}_{z_0} \subset \partial^{CX} H_{z_0}$ in a fixed fiber $H_{z_0}$ (with $z_0 \in \textrm{int} D^2$). The restriction of $\alpha^0_{z_0}$ onto $\partial^{CX} H_{z_0}$ is a contact form making $\partial^{CX} H_{z_0}$ convex, and $\mathcal{S}^{n-1}_{z_0}$ is Legendrian in $(\partial^{CX} H_{z_0}, \alpha^0_{z_0}|_{\partial^{CX} H_{z_0}})$. So we have $$CSN(\partial^{CX} H_{z_0},\mathcal{S}^{n-1}_{z_0})=0.$$
Also $(\partial^{CX} H_{z_0},\alpha^0_{z_0}|_{\partial^{CX} H_{z_0}})$ is a contact submanifold manifold of $(\partial^{CX} H_n,\alpha^0|_{\partial^{CX} H_n})$. Then from the inclusions $\mathcal{S}^{n-1}_{z_0} \subset \partial^{CX} H_{z_0} \subset \partial^{CX} H_n$ we see that
$$CSN(\partial^{CX} H_n,\mathcal{S}^{n-1}_{z_0})=CSN(\partial^{CX} H_n,\partial^{CX} H_{z_0})|_{\mathcal{S}^{n-1}_{z_0}} \oplus CSN(\partial^{CX} H_{z_0},\mathcal{S}^{n-1}_{z_0})=\mathcal{S}^{n-1}_{z_0} \times D^2.$$

Now we will show that all the above individual attachments are indeed pieces of the attachment of the Weinstein handle $H_n$ to $E$ along $S_{z_0}$ by finding an isotropic setup which agrees with each individual fiber-wise gluing. To this end, note that we have the map $f_{z_0}:\mathcal{S}_{z_0}^{n-1} \to S_{z_0}$ given in stage (I) of Definition \ref{def:Stabilization_Lefschetz_Fibration}.
Define the map $$\Psi:CSN(\partial^{CX} H_n,\mathcal{S}^{n-1}_{z_0})=\mathcal{S}^{n-1}_{z_0} \times D^2 \longrightarrow S_{z_0} \times D^2=CSN(\partial E,S_{z_0})$$ by the rule $$\Psi(p,z)=(f_{z_0}(p),z).$$
Clearly, $\Psi$ is a bundle map and covers $f_{z_0}$, and so by Proposition \ref{prop:isotropic_setup}, we can find neighborhoods $U$ of $\mathcal{S}_{z_0}^{n-1}$ in $H_n$ and $V$ of $S_{z_0}$ in $E$ and an isomorphism of isotropic setups
$$\phi_n:(U,\omega^0|_{U},\chi^0|_{U},\partial^{CX} H_n \cap U,\mathcal{S}_{z_0}^{n-1}) \to (V,\omega|_{V},\chi|_{V},\partial E \cap V,S_{z_0}) $$
which restricts to $f_{z_0}$ and the bundle map $\Psi$. We may assume that $\partial^{CX} H_n \cap U=\partial^{CX} H_n$, that is, $\phi_n$ attaches $H_n$ to $E$ along the whole convex part $\partial^{CX} H_n= \partial^{CX} H \times D^2$ of its boundary. Now consider the boundaries
$$\partial^{CX} H_n=\partial^{CX} H_{z_0} \times D^2 \quad \textrm{and} \quad \partial_h E=\bigcup_{z\in D^2} \partial E_z = \partial E_{z_0} \times D^2.$$ For each $z \in D^2$, by attaching $H_z$ to $E_z$ using $\phi_z$, we glue $\partial^{CX} H_{z_0} \times \{z\} \in \partial^{CX} H_n$ with $\partial E_{z_0} \times \{z\} \in \partial_h E$ and also we map $\mathcal{S}_{z}^{n-1}$ onto $S_{z}$ by $f_z$. Therefore, attaching all $H_z$'s to $E_z$'s along $\phi_z$'s defines a smooth map $\mathcal{S}^{n-1}_{z_0} \times D^2 \longrightarrow S_{z_0} \times D^2$ which is identity on the $D^2$-factor and maps $\mathcal{S}^{n-1}_{z_0}$ onto $S_{z_0}$ via $f_{z_0}$, and so it coincides with $\Psi$. Hence, we conclude that overall effect of attaching all $H_z$'s to $E_z$'s using $\phi_z$'s on $E$ is equivalent to attaching Weinstein handle $H_n$ to $E$ using $\phi_n$.

By Lemma \ref{lem:glueing_exact_forms} we know that the resulting manifold $\tilde{E}:=E \cup_{\phi_n} H_n$ has an exact symplectic structure $(\tilde{\omega},\tilde{\alpha},\tilde{\chi})$ obtained by gluing those on $E$ and $H_n$. In other words,
$$(\tilde{\omega},\tilde{\alpha},\tilde{\chi})=(\omega \cup_{\phi_n} \omega^0, \alpha \cup_{\phi_n} \alpha^0, \chi \cup_{\phi_n} \chi^0).$$
Also, clearly, $\pi$ extends over $H_n$ and we get a Lefschetz fibration $\tilde{\pi}:\tilde{E} \to D^2$ with regular fiber $X'$ and monodromy $h$ (original $h$ which is trivially extended over $H$).
To check that $(\tilde{\omega},\tilde{\alpha},\tilde{\chi})$ restricts to $(\tilde{\omega}_z,\tilde{\alpha}_z,\tilde{\chi}_z)$ on each new regular fiber $\tilde{E}_z \approx X'$, we proceed as follows: For each $z \in D^2$, by taking $U_z$ (resp. $V_z$) small enough, we can guarantee that the union $$\bigcup_{z\in D^2} U_z \quad \left(\textrm{resp.} \bigcup_{z\in D^2} V_z\right)$$ lies in the collar neighborhood of $\partial^{CX} H_n$ (resp. $\partial_h E$) where we have the local triviality condition (as described in the definition of exact symplectic fibration). By using these local trivialities, we combine all the exact symplectic structures $(\omega_z \cup_{\phi_z} \omega^0_z, \alpha_z \cup_{\phi_z} \alpha^0_z, \chi_z \cup_{\phi_z} \chi^0_z)$ together, and surely the resulting structure must be $(\omega \cup_{\phi_n} \omega^0, \alpha \cup_{\phi_n} \alpha^0, \chi \cup_{\phi_n} \chi^0)$ because $(\omega_z,\alpha_z,\chi_z)$'s (resp. $(\omega^0_z,\alpha^0_z,\chi^0_z)$'s) patch together and give $(\omega,\alpha,\chi)$ (resp. $(\omega^0,\alpha^0,\chi^0)$). This completes the proof of Lemma \ref{lem:H'_replaced_by_H_n}.
\end{proof}

So far we have constructed a compatible exact Lefschetz fibration $(\tilde{\pi},\tilde{E},\tilde{\omega},\tilde{\alpha},\tilde{\chi},X',h)$ on $$\tilde{E}:=E \cup_{\phi_n} H_n,$$ in other words, we extended  $(\omega,\alpha,\chi)$ over the handle $H'$ by showing that $H'$ can be replaced by $H_n$. Next, we want to extend $(\tilde{\omega},\tilde{\alpha},\tilde{\chi})$ over $H''$ by showing that $H''$ can be replaced by the Weinstein handle $H_{n+1}$.

\begin{remark} \label{rmk:Reversing_Louville_vec_field}
Although Weinstein handles are attached along the convex part of their boundaries (according to the convention of present paper which coincides with the one in \cite{Weinstein}), we actually need to reverse the direction of the Liouville vector field of the Weinstein handle when it is being attached to a convex boundary of a symplectic manifold. Otherwise it is impossible to match the Liouville directions of the symplectizations used in the gluing. Since we have attached $H_n$ to $E$ along the whole $\partial^{CX}H_n$ by matching $-\chi^0=-\chi_H+r/2\,\partial / \partial r$ with $\chi$, we have to now consider $$\partial^{CV} H_n=(\partial^{CV}H \times D^2) \cup (H \times S^1)$$ as a subset of the convex part of the boundary $\partial \tilde{E}$.
\end{remark}

\begin{lemma} \label{lem:H''_replaced_by_H_n+1}
The Lefschetz handle $H''$ can be replaced by the Weinstein handle $H_{n+1}$.
\end{lemma}

\begin{proof}
Recall that $H''$ is attach to $\tilde{E}$ along the Lagrangian $n$-sphere $S$ on a page $(X',d\tilde{\alpha}|_{X'})$ of the boundary exact open book $(X',h)$ carrying the contact structure $\tilde{\xi}=\textrm{Ker} (\tilde{\alpha}|_{\,\partial \tilde{E}})$ on $\partial \tilde{E}$. Say $S \subset \tilde{E}_{\theta_0}\,(\approx X')$ for some $\theta_0 \in S^1=\partial D^2$. From its construction (given in Definition \ref{def:Stabilization_Lefschetz_Fibration}) and the notation introduced above, $S$ is the union
$$S=L \cup_{f_{\theta_0}} D$$ of the Lagrangian $n$-disk $L \in (E_{\theta_0},\omega_{\theta_0}=d\alpha_{\theta_0})$ and the Lagrangian core disk $D\,(\approx D^n)$ of the $2n$-dimensional Weinstein handle $(H_{\theta_0},d\alpha^0_{\theta_0})$. Note that $H_{\theta_0}\approx H$ is the fiber (over ${\theta_0} \in S^1$) of the trivial fibration  $H \times S^1$. By assumption $L$ is Legendrian in $(\partial \tilde{E}, \tilde{\xi})$. On the other hand, the contact form $\tilde{\alpha}|_{\partial \tilde{E}}$ restricts to a contact form $$\alpha_\sharp:=(\tilde{\alpha}|_{\,\partial \tilde{E}})|_{H \times S^1}=(\iota_{-\chi^0}\omega^0)|_{H \times S^1}=-\alpha^0|_{H \times S^1}=\frac{d\theta}{2}-\alpha_H=\frac{d\theta}{2} +\sum_{j=1}^n (2x_jdy_j+y_jdx_j)$$
on a convex part $H \times S^1 \subset \partial \tilde{E}$. Observe that the core disk $D \subset H_{\theta_0}=H \times \{{\theta_0}\}$ is given by the set $$\{x_1=x_2=\cdots =x_n=0, \quad \theta={\theta_0} \textrm{(constant)}\},$$ and so clearly $\alpha_\sharp=0$ on $D$ which means that $D$ is Legendrian in $(H \times S^1,\alpha_\sharp) \subset (\partial \tilde{E},\alpha|_{\partial \tilde{E}})$. Therefore, the $n$-sphere $S$ is also Legendrian in $(\partial \tilde{E},\alpha|_{\partial \tilde{E}})$ which implies that $CSN(\partial \tilde{E},S)=0$. Moreover, we also have $CSN(\partial^{CX}H_{n+1},\mathcal{S}^{n})=0$ as $\mathcal{S}^{n}$ is Legendrian in $(\partial^{CX}H_{n+1},\alpha_{n+1}|_{\partial^{CX}H_{n+1}})$ by definition. Then by Proposition \ref{prop:isotropic_setup}, we can find neighborhoods $U'$ of $\mathcal{S}^n$ in $H_{n+1}$ and $V'$ of $S$ in $\tilde{E}$ and an isomorphism of isotropic setups
$$\phi_{n+1}:(U',\omega_0|_{U'},\chi_{n+1}|_{U'},\partial^{CX} H_{n+1} \cap U',\mathcal{S}^{n}) \to (V',\tilde{\omega}|_{V'},\tilde{\chi}|_{V'},\partial \tilde{E} \cap V',S)$$
which restricts to the embedding $\phi:\mathcal{S}^{n} \to S$ determined by Definition \ref{def:Stabilization_Lefschetz_Fibration}. Now by Theorem \ref{thm:Weinstein_main_theorem} attaching $H_{n+1}$ to $\tilde{E}$ using $\phi_{n+1}$ results in an exact symplectic manifold $$E':=\tilde{E} \cup_{\phi_{n+1}} H_{n+1}$$ equipped with the exact symplectic data $$(\omega',\alpha',\chi')=(\tilde{\omega} \cup_{\phi_{n+1}} \omega_0, \tilde{\alpha} \cup_{\phi_{n+1}} \alpha_{n+1}, \tilde{\chi} \cup_{\phi_{n+1}} \chi_{n+1}).$$
Again we may assume that $\partial^{CX} H_{n+1} \cap U'=\partial^{CX} H_{n+1}$, that is, $\phi_{n+1}$ attaches $H_{n+1}$ to $\tilde{E}$ along the whole convex part $\partial^{CX} H_{n+1}$ of its boundary. Note that the step we just explained replaces $H''$ with the Weinstein handle $H_{n+1}$. From the bundle isomorphisms
$$\nu_1 \oplus \varepsilon \cong TS \oplus \varepsilon \cong T^*S \oplus (T\partial \tilde{E}/\tilde{\xi})|_S $$ we see that the framings on the normal bundle $N(\partial \tilde{E},S)$ which are used to attach $H''$ and $H_{n+1}$ coincide, and so attaching  $H''$ and $H_{n+1}$ are topologically the same. Therefore, we know by Theorem \ref{thm:Determination_Lefschetz_fibration} that when we add $H_{n+1}$ to $(\tilde{\pi},\tilde{E},\tilde{\omega},\tilde{\alpha},\tilde{\chi},X',h)$, we can extend the underlying topological Lefschetz fibration $(\tilde{\pi},\tilde{E},X',h)$ over $H_{n+1}$ and get the Lefschetz fibration $$\mathcal{S_{LF}}[(\pi,E,X,h);L]=(\pi',E',X',\delta_{(\phi,\phi')}\circ h)$$ where $\delta_{(\phi,\phi')}$ is the right-handed Dehn twist described in Definition \ref{def:Stabilization_Lefschetz_Fibration}. The proof of Lemma \ref{lem:H''_replaced_by_H_n+1} is now complete.
\end{proof}

To be able to say that we have constructed a compatible exact Lefschetz fibration $$(\pi',E',\omega',\alpha',\chi',X',\delta_{(\phi,\phi')}\circ h)$$ on $E'$, it remains to check that the exact symplectic structure $(\omega',\alpha',\chi')$ restricts to an exact symplectic structure on every new regular fiber $E'_z$. Note that this time we are not changing the diffeomorphism type of the regular fiber, that is $E'_z\approx\tilde{E}_z\approx X'$. The Weinstein handle $H_{n+1}$ is attached to $\tilde{E}$ along the neighborhood $$\partial \tilde{E} \cap V' \approx N(\tilde{E}_{\theta_0},S) \times [0,1]$$ of the attaching sphere $S \subset \tilde{E}_{\theta_0}$ in $\partial \tilde{E}$ where we identify the interval $[0,1]$ with a closed arc in $S^1\setminus \{pt\}$ such that $0<\theta_0<1$. Consider the mapping torus $$X' \times [0,1] / (x,0)\sim(h(x),1)$$ of the open book $(X',h)$ on $\partial \tilde{E}$ and the inclusion $$N(\tilde{E}_{\theta_0},S) \times [0,1] \subset X' \times [0,1].$$ Observe that attaching $H_{n+1}$ to $\tilde{E}$ along the attaching region $N(\tilde{E}_{\theta_0},S) \times [0,1]$ results in a new mapping torus $$X' \times [0,1] / (x,0)\sim((\delta_{(\phi,\phi')}\circ h)(x),1)$$  for the open book $(X',\delta_{(\phi,\phi')}\circ h)$ on the new boundary $(\partial E', \xi':=\textrm{Ker} (\alpha'|_{\partial E'}))$ obtained from the corresponding elementary (contact) surgery on $(\partial \tilde{E}, \tilde{\xi})$ along the Legendrian sphere $S$. To get this new mapping torus, we are just gluing two copies of $X'$ equipped with the exact symplectic structure $(\tilde{\omega}_{X'},d\tilde{\omega}_{X'},\tilde{\chi}_{X'})$ using the symplectomorphism $$\delta_{(\phi,\phi')}\circ h \in \textrm{Symp}(X',\tilde{\omega}_{X'}).$$
Therefore, attaching $H_{n+1}$ does not change the exact symplectic structures of regular fibers. But of course, it does change the structure of the Lefschetz fibration: Relative to $\tilde{\pi}:\tilde{E}\to D^2$, the new Lefschetz fibration $\pi':E'=\tilde{E} \cup H_{n+1} \to D^2$ has one more critical point (and so one more singular fiber) located at the origin in the Weinstein handle $H_{n+1}$.

We conclude that $(\omega',\alpha',\chi')$ restricts to
$$(\omega'_z,\alpha'_z,\chi'_z)=(\tilde{\omega}_z, \tilde{\alpha}_z, \tilde{\chi}_z)$$
on each regular fiber $E'_z\approx\tilde{E}_z\approx X'$ of $\pi'$. Hence, we have a compatible exact Lefschetz fibration $(\pi',E',\omega',\alpha',\chi',X',\delta_{(\phi,\phi')}\circ h)$ as claimed.
This finishes the proof of Theorem \ref{thm:Legendrian_stabilization_gives_exact_Lefs_fib}.
\end{proof}

We have the following consequence of Theorem \ref{thm:Legendrian_stabilization_gives_exact_Lefs_fib}:

\begin{corollary} \label{cor:Legendrian_stabilization_gives_exact_open_book}
Any positive stabilization of an exact open book along a properly embedded Legendrian disk is also an exact open book.
\end{corollary}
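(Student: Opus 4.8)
The plan is to obtain the statement as an essentially immediate consequence of Theorem~\ref{thm:Legendrian_stabilization_gives_exact_Lefs_fib}, combined with the boundary correspondence of Theorem~\ref{thm:openbooks_Lefs.fibs}. The point is that all of the analytic content---extending the exact symplectic data $(\omega,\alpha,\chi)$ over the handles $H_n$ and $H_{n+1}$---has already been carried out in the proof of Theorem~\ref{thm:Legendrian_stabilization_gives_exact_Lefs_fib}. Hence at this level the only remaining task is bookkeeping: to translate between the two notions of positive stabilization, one for open books and one for Lefschetz fibrations.

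First I would unwind Definition~\ref{def:Exact_Open_Book}. Let $(X,h)$ be an exact open book; by definition it is induced by \emph{some} compatible exact Lefschetz fibration, which I would fix once and for all and write as $(\pi,E,\omega,\alpha,\chi,X,h)$. By Theorem~\ref{thm:Exact_Openbooks_Support} this open book carries the contact structure $\xi=\textrm{Ker}(\alpha|_{\partial E})$ on $\partial E$. Next I would verify that a properly embedded Legendrian disk $L$ in a page is a legitimate input for \emph{both} stabilization operations. This is exactly the observation made at the start of the proof of Theorem~\ref{thm:Legendrian_stabilization_gives_exact_Lefs_fib}: from the identity $d\alpha(u,v)=\mathcal{L}_u\alpha(v)-\mathcal{L}_v\alpha(u)+\alpha([u,v])$ one gets $d\alpha|_{TL}=0$, so that $L$ is automatically Lagrangian on the page $(X,d\alpha)$. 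Consequently both $\mathcal{S_{OB}}[(X,h);L]$ of Definition~\ref{def:Stabilization_Contac_Open Book} and $\mathcal{S_{LF}}[(\pi,E,X,h);L]$ of Definition~\ref{def:Stabilization_Lefschetz_Fibration} are defined along $L$.

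The core of the argument is then a single invocation of each theorem. Applying Theorem~\ref{thm:Legendrian_stabilization_gives_exact_Lefs_fib} to the fixed fibration $(\pi,E,\omega,\alpha,\chi,X,h)$ along $L$ shows that $\mathcal{S_{LF}}[(\pi,E,X,h);L]=(\pi',E',\omega',\alpha',\chi',X',\delta_{(\phi,\phi')}\circ h)$ is again a compatible exact Lefschetz fibration. On the other hand, Theorem~\ref{thm:openbooks_Lefs.fibs} identifies the open book induced on $\partial E'$ by $\mathcal{S_{LF}}[(\pi,E,X,h);L]$ with the positive stabilization $\mathcal{S_{OB}}[(X,h);L]$. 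Therefore $\mathcal{S_{OB}}[(X,h);L]$ is induced by a compatible exact Lefschetz fibration, and so by Definition~\ref{def:Exact_Open_Book} it is an exact open book. I do not expect any genuine obstacle here; the only mild subtlety is that ``exact open book'' is an existential notion---it asserts that \emph{some} compatible exact Lefschetz fibration induces the open book---so the care I would take is simply to fix such a fibration at the outset and feed precisely that one into Theorem~\ref{thm:Legendrian_stabilization_gives_exact_Lefs_fib}, rather than needing any statement about independence of the choice.
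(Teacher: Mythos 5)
Your proof is correct and takes essentially the same route as the paper's: both fix a compatible exact Lefschetz fibration inducing $(X,h)$ via Definition~\ref{def:Exact_Open_Book}, invoke Theorem~\ref{thm:openbooks_Lefs.fibs} to identify the boundary open book of $\mathcal{S_{LF}}[(\pi,E,X,h);L]$ with $\mathcal{S_{OB}}[(X,h);L]$, and invoke Theorem~\ref{thm:Legendrian_stabilization_gives_exact_Lefs_fib} to conclude that this stabilized fibration is again a compatible exact Lefschetz fibration, so that $\mathcal{S_{OB}}[(X,h);L]$ is exact by definition. Your additional remarks (Legendrian implies Lagrangian via the identity for $d\alpha$, and fixing one specific inducing fibration at the outset) are exactly the points the paper handles, the former with the same computation inside the proof of Theorem~\ref{thm:Legendrian_stabilization_gives_exact_Lefs_fib}.
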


\begin{proof} By definition if $(X,h)$ is an exact open book, then there exist a compatible exact Lefschetz fibration $(\pi,E,\omega,\alpha,\chi,X,h)$ which induces $(X,h)$ on the boundary. Let $L$ be any properly embedded Legendrian (and so Lagrangian) disk in $(X,\omega)$. Then by Theorem \ref{thm:openbooks_Lefs.fibs} we know that the stabilization $\mathcal{S_{OB}}[(X,h);L]$ is induced by  $\mathcal{S_{LF}}[(\pi,E,X,h);L]$. Moreover, we know, by Theorem \ref{thm:Legendrian_stabilization_gives_exact_Lefs_fib}, that there exists a compatible exact Lefschetz fibration $(\pi',E',\omega',\alpha',\chi',X',h')$ with underlying topological Lefschetz fibration $\mathcal{S_{LF}}[(\pi,E,X,h);L]$. In particular, $\mathcal{S_{OB}}[(X,h);L]$ is induced by $(\pi',E',\omega',\alpha',\chi',X',h')$, and hence, it is exact by definition.
\end{proof}

After all, the following definitions make sense and fit into the frame very well.

\begin{definition} \label{def:Convex_Stabilizations}
\begin{itemize} \item[(i)] A \emph{convex stabilization} $\mathcal{S^{C}_{LF}}[(\pi,E,\omega,\alpha,\chi,X,h);L]$ of a compatible exact Lefschetz fibration $(\pi,E,\omega,\alpha,\chi,X,h)$ is defined to be the positive stabilization $\mathcal{S_{LF}}[(\pi,E,X,h);L]$ where $L$ is a properly embedded Legendrian disk on $X$.\\

\item[(ii)] A \emph{convex stabilization} $\mathcal{S^{C}_{OB}}[(X,h);L]$ of an exact open book $(X,h)$ is defined to be the positive stabilization $\mathcal{S_{OB}}[(X,h);L]$ where $L$ is a properly embedded Legendrian disk on $X$.
\end{itemize}
\end{definition}

The theorem that we state next can be considered as the exact symplectic version of Theorem \ref{thm:openbooks_Lefs.fibs}. It summarizes some of the results that we have shown in the language of convex stabilizations. The proof is a straight forward combination of previous statements and definitions, and so will be omitted.

\begin{theorem} \label{thm:exactopenbooks_exactLefs.fibs}
$\mathcal{S^{C}_{LF}}[(\pi,E,\omega,\alpha,\chi,X,h);L]$ induces the (exact) open book $\mathcal{S^{C}_{OB}}[(X,h);L]$. Conversely, if an (exact) open book $(X,h)$ is induced by $(\pi,E,\omega,\alpha,\chi,X,h)$, then any convex stabilization $\mathcal{S^{C}_{OB}}[(X,h);L]$ of $(X,h)$ is induced by $\mathcal{S^{C}_{LF}}[(\pi,E,\omega,\alpha,\chi,X,h);L]$. \qed
\end{theorem}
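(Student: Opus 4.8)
The plan is to reduce the statement entirely to results already established, since a convex stabilization is by definition nothing more than a positive stabilization performed along a Legendrian (hence Lagrangian) disk. First I would unwind Definition \ref{def:Convex_Stabilizations}: at the level of the underlying objects we have $\mathcal{S^{C}_{LF}}[(\pi,E,\omega,\alpha,\chi,X,h);L]=\mathcal{S_{LF}}[(\pi,E,X,h);L]$ and $\mathcal{S^{C}_{OB}}[(X,h);L]=\mathcal{S_{OB}}[(X,h);L]$, where $L$ is a properly embedded Legendrian $n$-disk on a page $X$. Thus the forward direction of the theorem is exactly the forward direction of Theorem \ref{thm:openbooks_Lefs.fibs} applied to the smooth Lefschetz fibration $(\pi,E,X,h)$: the positive stabilization $\mathcal{S_{LF}}[(\pi,E,X,h);L]$ induces the open book $\mathcal{S_{OB}}[(X,h);L]$ on the boundary.

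The only content beyond this smooth statement is that both stabilized objects are again \emph{exact}. For the total space this is precisely Theorem \ref{thm:Legendrian_stabilization_gives_exact_Lefs_fib}: because $L$ is Legendrian, the exact symplectic structure $(\omega,\alpha,\chi)$ extends over the two handles $H_n$ and $H_{n+1}$ used to build the stabilization (Lemmas \ref{lem:H'_replaced_by_H_n} and \ref{lem:H''_replaced_by_H_n+1}), producing a compatible exact Lefschetz fibration $(\pi',E',\omega',\alpha',\chi',X',\delta_{(\phi,\phi')}\circ h)$. For the boundary open book, exactness then follows from Corollary \ref{cor:Legendrian_stabilization_gives_exact_open_book}, or equivalently from Definition \ref{def:Exact_Open_Book}: the stabilized open book is induced by a compatible exact Lefschetz fibration, hence is exact, and by Theorem \ref{thm:Exact_Openbooks_Support} it carries the contact structure $\textrm{Ker}(\alpha'|_{\,\partial E'})$.

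For the converse I would run the converse half of Theorem \ref{thm:openbooks_Lefs.fibs}: starting from a given convex stabilization $\mathcal{S^{C}_{OB}}[(X,h);L]=\mathcal{S_{OB}}[(X,h);L]$ of the exact open book $(X,h)$, the new page is $X'=X\cup H$, and attaching the thickened handle $H'$ and the Lefschetz handle $H''$ (equivalently $H_n$ and $H_{n+1}$) to $E$ yields a Lefschetz fibration whose boundary open book is $\mathcal{S_{OB}}[(X,h);L]$ and over whose handles the exact structure extends by the same two lemmas. This fibration is exactly $\mathcal{S^{C}_{LF}}[(\pi,E,\omega,\alpha,\chi,X,h);L]$, so the converse holds and both directions are established.

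I do not expect any genuine obstacle here: all the analytic work, namely extending the Liouville field across the handles, checking the fiberwise exact structures, and verifying the contact compatibility, has already been carried out in Section \ref{sec:Convex Stabilization} and in Theorem \ref{thm:Exact_Openbooks_Support}. The one point that deserves a sentence of care is reconciling the two roles of the Legendrian hypothesis on $L$: it is this hypothesis that simultaneously guarantees (via the computation $d\alpha(u,v)=0$ for $u,v\in TL$ used in the proof of Theorem \ref{thm:Legendrian_stabilization_gives_exact_Lefs_fib}) that $L$ is Lagrangian on the page, so that $\mathcal{S_{LF}}$ is even defined, and that the resulting structures are exact. Once this is observed, the theorem is a straightforward bookkeeping assembly of Theorems \ref{thm:openbooks_Lefs.fibs} and \ref{thm:Legendrian_stabilization_gives_exact_Lefs_fib} together with Corollary \ref{cor:Legendrian_stabilization_gives_exact_open_book}, which is why it may reasonably be stated without a separate detailed argument.
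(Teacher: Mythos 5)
Your proposal is correct and is exactly the argument the paper intends: the paper itself omits the proof of this theorem as ``a straight forward combination of previous statements and definitions,'' and your assembly of Definition \ref{def:Convex_Stabilizations}, Theorem \ref{thm:openbooks_Lefs.fibs}, Theorem \ref{thm:Legendrian_stabilization_gives_exact_Lefs_fib}, and Corollary \ref{cor:Legendrian_stabilization_gives_exact_open_book} is precisely that combination. Your added remark that the Legendrian hypothesis on $L$ both makes $L$ Lagrangian (so $\mathcal{S_{LF}}$ is defined) and drives the exactness is a point the paper also makes, inside the proof of Theorem \ref{thm:Legendrian_stabilization_gives_exact_Lefs_fib}.
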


Combining the results we get so far, we know that a convex stabilization of a compatible exact Lefschetz fibration produces an another compatible exact Lefschetz fibration on a manifold which has the same diffeomorphism type with the original one. One can see that these manifolds have symplectomorphic completions (recall that given a strong symplectic filling $E$ of a contact manifold $M$, the \emph{completion} of $E$ is a noncompact symplectic manifold obtained from $E$ by gluing the positive end of the symplectization of $M$):

\begin{theorem} \label{thm:Symplectic_Canceling_Pair}
Let $(E',\omega',\alpha')$ be the total space of $\mathcal{S^{C}_{LF}}[(\pi,E,\omega,\alpha,\chi,X,h);L]$, Then $(E',\omega',\alpha')$ and $(E,\omega,\alpha)$ have symplectomorphic completions. In other words, the pair $$\{H_n,H_{n+1}\}$$ used in the construction of $\mathcal{S^{C}_{LF}}[(\pi,E,\omega,\alpha,\chi,X,h);L]$
is a symplectically canceling pair for the completions.
\end{theorem}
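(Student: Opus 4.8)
The plan is to recognize $E'=E\cup_{\phi_n}H_n\cup_{\phi_{n+1}}H_{n+1}$ as a Weinstein cobordism built on top of the Liouville domain $(E,\omega,\alpha,\chi)$ by attaching the subcritical Weinstein handle $H_n$ of index $n$ followed by the critical Weinstein handle $H_{n+1}$ of index $n+1$, and then to show that this cobordism cancels in the Weinstein category. The conceptual backbone is the standard fact that the completion $\hat{E}$ of a Liouville domain (obtained by gluing the positive end of the symplectization of $\partial E$, as in the definition recalled above) depends only on the Weinstein homotopy class of $(E,\omega,\alpha,\chi)$: a Weinstein homotopy rel $\partial E$ induces an exact symplectomorphism of completions. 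Hence it suffices to exhibit a Weinstein homotopy from the cobordism $\{H_n,H_{n+1}\}$ to the trivial (product) cobordism $\partial E\times[0,1]$, and this is exactly what ``symplectically canceling pair for the completions'' should mean.

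First I would pin down the relevant spheres and their dimensions. The handle $H_n$ has index $n$, so its belt (ascending) sphere is $S^{n+1}\subset\partial\tilde{E}$, of dimension $n+1$; the handle $H_{n+1}$ has index $n+1$, so its attaching (descending) sphere is the Legendrian $\mathcal{S}^n\subset\partial\tilde{E}$, of dimension $n$. Since $\dim\partial\tilde{E}=2n+1=(n+1)+n$, these two spheres are of complementary dimension and a transverse intersection is a finite set of points. I would then import the geometric input already recorded in the proof of Theorem \ref{thm:openbooks_Lefs.fibs}: the smooth handles $H'$ and $H''$, which Lemmas \ref{lem:H'_replaced_by_H_n} and \ref{lem:H''_replaced_by_H_n+1} replaced by $H_n$ and $H_{n+1}$ respectively, form a smoothly canceling pair precisely because the attaching sphere of $H''$ meets the belt sphere of $H'$ transversely in a single point. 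Translating through these replacements, the attaching Legendrian $\mathcal{S}^n$ of $H_{n+1}$ meets the belt sphere $S^{n+1}$ of $H_n$ transversely in exactly one point.

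With this in hand, the last step is to invoke the Weinstein handle-cancellation theorem (due to Cieliebak and Eliashberg): a subcritical handle of index $k$ and a critical handle of index $k+1$ whose attaching and belt spheres intersect transversely in a single point form a canceling pair in the Weinstein category, and the resulting cobordism is Weinstein homotopic rel $\partial E$ to the trivial cobordism. Applying this with $k=n$ to $\{H_n,H_{n+1}\}$ shows that $(E',\omega',\alpha')$ is Weinstein homotopic to $(E,\omega,\alpha)$, whence their completions are symplectomorphic; this is precisely the assertion of the theorem. Throughout I would keep track of the already-fixed exact data $(\omega,\alpha,\chi)$ on $E$, $(\omega^0,\alpha^0,\chi^0)$ on $H_n$, and the standard $(\omega_0,\alpha_{n+1},\chi_{n+1})$ on $H_{n+1}$, so that the homotopy is performed through genuine exact symplectic structures (using Lemma \ref{lem:glueing_exact_forms} and the standard neighborhoods from Proposition \ref{prop:isotropic_setup}).

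The hard part will be the passage from the smooth single-point transverse intersection to honest cancellation of the \emph{Weinstein} structure. Smooth cancellation only records the intersection pattern, whereas symplectic cancellation additionally requires that, after attaching $H_n$, the attaching Legendrian $\mathcal{S}^n$ of $H_{n+1}$ be Legendrian isotopic, in a neighborhood of the belt sphere of $H_n$, to the standard cancelling position of the local model. This flexibility is exactly what the index pair $(n,n+1)$ supplies: the belt sphere of the subcritical handle $H_n$ carries a standard symplectic neighborhood, and the single transverse intersection can be straightened to the model cancellation within it. Making this straightening precise, and verifying that it can be carried out compatibly with the three exact structures listed above (so that the homotopy is a Weinstein homotopy and not merely a smooth isotopy), is the one technical point that genuinely needs care.
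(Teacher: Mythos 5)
Your proposal is correct and takes essentially the same route as the paper: both arguments import the smooth cancellation of the pair (the belt sphere of $H_n$ meeting the attaching sphere of $H_{n+1}$ transversely in one point, as observed in the proof of Theorem \ref{thm:openbooks_Lefs.fibs}) and then invoke the known result that smoothly canceling Weinstein handles of consecutive index cancel symplectically at the level of completions. The only difference is bookkeeping: the paper cites Eliashberg's Lemma 3.6b and Van Koert's Lemmas 3.9 and 3.11 for this cancellation, while you phrase it via Weinstein homotopy invariance of completions in the style of Cieliebak--Eliashberg; the technical straightening issue you flag at the end is precisely what those cited lemmas handle.
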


\begin{proof} We have already observed in the proof of Theorem \ref{thm:openbooks_Lefs.fibs} that $\{H_n,H_{n+1}\}$ is a canceling pair in smooth category (as the belt sphere of $H_n$ intersects the attaching sphere of $H_{n+1}$ transversely once). Moreover, Lemma 3.6b in \cite{Eliashberg} (see also Lemma 3.9 in \cite{Van Koert}) implies that two Weinstein handles form a symplectically canceling pair for the completions if they form a canceling pair in smooth category and their Morse-index difference is one. As a result, we conclude that $\{H_n,H_{n+1}\}$ is a symplectically canceling pair for the completions (for a more precise discussion see Lemma 3.11 in \cite{Van Koert}).
\end{proof}

As an immediate corollary, we have

\begin{corollary} \label{cor:Contactomorphic_Boundaries}
Let $\xi$ (resp. $\xi'$) be the contact structure on $\partial E$ (resp. $\partial E'$) induced by the exact symplectic structure of $(\pi,E,\omega,\alpha,\chi,X,h)$ (resp. $(\pi',E',\omega',\alpha',\chi',X',h')=\mathcal{S^{C}_{LF}}[(\pi,E,\omega,\alpha,\chi,X,h);L]$). Then $(\partial E,\xi)$ is contactomorhic $(\partial E',\xi')$. \hspace{2.7cm} $\square$
\end{corollary}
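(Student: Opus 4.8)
The plan is to deduce the contactomorphism directly from the symplectomorphism of completions furnished by Theorem \ref{thm:Symplectic_Canceling_Pair}, exploiting the fact that the contact structure on the convex boundary of an exact symplectic (Liouville) domain is entirely encoded in the Liouville structure near infinity. Recall that the completion $\hat{E}$ of $(E,\omega,\alpha,\chi)$ is obtained by gluing the positive end $\partial E \times [0,\infty)$ of the symplectization of $(\partial E,\xi)$, with symplectic form $d(e^s(\alpha|_{\partial E}))$, so that the Liouville field $\chi$ (satisfying $\mathcal{L}_\chi\omega=\omega$) extends to a complete Liouville field reading as $\partial_s$ on the end; the analogous construction produces $\hat{E}'$ from $(E',\omega',\alpha',\chi')$.

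First I would record the elementary observation that in such a completion the contact boundary $(\partial E,\xi)$ is contactomorphic to every slice at infinity $\partial E \times \{s\}$ via the flow of $\chi$: flowing one level set of a Liouville field to another is a contactomorphism of the induced contact structures. Hence $(\partial E,\xi)$ is recovered, up to contactomorphism, from the germ of $\chi$ at infinity, and likewise $(\partial E',\xi')$ from the germ of the Liouville field of $\hat{E}'$ at infinity.

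The key step is then to observe that the symplectomorphism $\Phi:\hat{E}\to\hat{E}'$ produced in the proof of Theorem \ref{thm:Symplectic_Canceling_Pair} is the identity outside a compact set. Indeed, the passage from $E$ to $E'$ is effected by attaching the canceling pair $\{H_n,H_{n+1}\}$ of Weinstein handles, a modification supported in a compact neighborhood of the attaching region; the symplectic cancellation (Lemma 3.6b in \cite{Eliashberg}, cf.\ Lemma 3.11 in \cite{Van Koert}) is carried out inside this compact set and leaves the cylindrical ends untouched. Consequently $\Phi$ intertwines the two Liouville fields on the ends, carrying a slice at infinity of $\hat{E}$ to a slice at infinity of $\hat{E}'$ and thereby inducing a contactomorphism between them. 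Composing this with the two Liouville-flow contactomorphisms from the previous step yields the desired contactomorphism $(\partial E,\xi)\cong(\partial E',\xi')$.

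The only genuine obstacle is that a bare symplectomorphism of completions need \emph{not} induce a contactomorphism of ideal boundaries; one must know that $\Phi$ is compatible with the Liouville structures near infinity. This is precisely why the localization of the handle cancellation matters: because $\Phi$ may be taken to equal the identity (hence to conjugate the Liouville fields) on the cylindrical ends, the induced map on slices at infinity is automatically a contactomorphism. Had one only an abstract symplectomorphism of completions, an extra argument would be needed to recover the contact boundary; here that is unnecessary, which is exactly what makes the statement an \emph{immediate} corollary.
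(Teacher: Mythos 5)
Your instinct about where the difficulty lies is exactly right: a bare symplectomorphism of completions does not determine the contact boundary (there exist Liouville domains with symplectomorphic completions whose boundaries are not contactomorphic), so the statement of Theorem \ref{thm:Symplectic_Canceling_Pair} alone cannot yield the corollary. The gap is in your resolution of this difficulty. The claim that $\Phi\colon\hat{E}\to\hat{E}'$ ``is the identity outside a compact set'' and ``leaves the cylindrical ends untouched'' cannot be correct, even after choosing identifications: the end of $\hat{E}$ is the cylinder over $(\partial E,\alpha|_{\partial E})$, while the end of $\hat{E}'$ is the cylinder over $(\partial E',\alpha'|_{\partial E'})$, and by Theorem \ref{thm:Weinstein_main_theorem} these two boundaries differ precisely on the region where the surgeries along $\partial L$ and $S$ take place. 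The cylinder over that surgery region extends to infinity, so attaching $\{H_n,H_{n+1}\}$, although a compactly supported modification of the \emph{domain} $E$, is \emph{not} a compactly supported modification of its completion: the change in the boundary propagates along the entire end. Hence there is no a priori identification of neighborhoods of infinity under which $\Phi$ could be the identity, and the property you actually need --- that $\Phi$ intertwines the Liouville fields near infinity (cylindricity at infinity) --- is left unproved. Since a cylindrical-at-infinity symplectomorphism of completions induces a contactomorphism of ideal boundaries and conversely, this missing property is essentially equivalent to the conclusion you are trying to prove, so it cannot simply be asserted.

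What closes the gap is the stronger content of the results quoted in the proof of Theorem \ref{thm:Symplectic_Canceling_Pair}: Lemma 3.6b of \cite{Eliashberg} and Lemmas 3.9/3.11 of \cite{Van Koert} do not merely assert that the completions are symplectomorphic; they produce a homotopy of exact symplectic (Weinstein) structures on the fixed compact manifold $E'\cong E$, through structures with convex boundary, joining $(\omega',\alpha',\chi')$ (transported by the smooth handle-cancellation diffeomorphism) to $(\omega,\alpha,\chi)$. Restricting this homotopy to the closed manifold $\partial E$ gives a path of contact structures from $\xi'$ to $\xi$, and Gray stability then yields the contactomorphism $(\partial E,\xi)\cong(\partial E',\xi')$ directly, with no need to pass through the completions at all. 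This is the deduction the paper leaves implicit (the corollary is stated without proof). Your route through $\hat{E}$ can be repaired, but only by first establishing cylindricity of $\Phi$ via this Gray-stability argument, at which point the detour through completions becomes superfluous.
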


Finally, as an application, we verify a well-known result for the class of exact open books and their convex stabilizations. Namely,

\begin{corollary} \label{cor:convex_stab_respects_contact_str}
Let $\xi$ be a contact structure carried by an exact open book $(X,h)$. Then any convex stabilization $\mathcal{S^{C}_{OB}}[(X,h);L]$ of $(X,h)$ carries $\xi$.
\end{corollary}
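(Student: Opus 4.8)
The plan is to assemble the machinery already built up in the previous sections rather than to argue directly with contact forms on the stabilized page. Since $(X,h)$ is an exact open book carrying $\xi$, Definition \ref{def:Exact_Open_Book} furnishes a compatible exact Lefschetz fibration $(\pi,E,\omega,\alpha,\chi,X,h)$ inducing it, and by Theorem \ref{thm:Exact_Openbooks_Support} we may take $\xi=\textrm{Ker}(\alpha|_{\partial E})$. Because the open-book operation and the Lefschetz-fibration operation have been shown to be interchangeable, the idea is to pass to the fibration side, read off the carried contact structure there, and then identify it with $\xi$.

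First I would form the convex stabilization $\mathcal{S^{C}_{LF}}[(\pi,E,\omega,\alpha,\chi,X,h);L]=(\pi',E',\omega',\alpha',\chi',X',h')$, which by Theorem \ref{thm:Legendrian_stabilization_gives_exact_Lefs_fib} is again a compatible exact Lefschetz fibration, and by Theorem \ref{thm:exactopenbooks_exactLefs.fibs} induces precisely the convex stabilization $\mathcal{S^{C}_{OB}}[(X,h);L]$ on $\partial E'$. Applying Theorem \ref{thm:Exact_Openbooks_Support} a second time, now to $(\pi',E',\omega',\alpha',\chi',X',h')$, shows that $\mathcal{S^{C}_{OB}}[(X,h);L]$ carries the contact structure $\xi'=\textrm{Ker}(\alpha'|_{\partial E'})$. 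Thus the stabilized open book automatically carries \emph{some} contact structure, and it remains only to identify $\xi'$ with $\xi$.

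That identification is supplied by the cancellation results. By Theorem \ref{thm:Symplectic_Canceling_Pair} the handles $\{H_n,H_{n+1}\}$ used to build $E'$ form a symplectically canceling pair, so $E$ and $E'$ have symplectomorphic completions; equivalently, Corollary \ref{cor:Contactomorphic_Boundaries} gives a contactomorphism $(\partial E,\xi)\cong(\partial E',\xi')$. Transporting $\mathcal{S^{C}_{OB}}[(X,h);L]$ through this contactomorphism, and noting that the underlying diffeomorphism $\partial E'\approx\partial E$ is exactly the one coming from the smooth cancellation of $\{H_n,H_{n+1}\}$ (under which $\partial E'=\partial E\,\#\,\mathbb{S}_{\varepsilon}^{2n+1}\approx\partial E$), yields that the stabilization carries $\xi$ on $\partial E$.

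The one point demanding care is the final bookkeeping step. Carrying, in the sense of Definition \ref{def:Compatibility}, is a property of the triple (manifold, contact structure, open book), so I must check that it is preserved by the contactomorphism of Corollary \ref{cor:Contactomorphic_Boundaries}, so that carrying $\xi'$ on $\partial E'$ genuinely pushes forward to carrying $\xi$ on $\partial E$ rather than merely to something abstractly isomorphic. This is immediate once the three conditions of Definition \ref{def:Compatibility} are seen to be natural under contactomorphism, but it is the place where the identification $\partial E'\approx\partial E$ must be pinned down as the canceling-pair diffeomorphism, and I would make that identification explicit before invoking it.
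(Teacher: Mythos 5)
Your proposal is correct and follows essentially the same route as the paper's own proof: pass to the compatible exact Lefschetz fibration, apply Theorem \ref{thm:exactopenbooks_exactLefs.fibs} and Theorem \ref{thm:Exact_Openbooks_Support} to see that the stabilized open book carries $\xi'=\textrm{Ker}(\alpha'|_{\partial E'})$, and conclude via Corollary \ref{cor:Contactomorphic_Boundaries}. Your closing remark about pinning down the identification $\partial E'\approx\partial E$ as the canceling-pair diffeomorphism and checking that ``carrying'' transports under the contactomorphism is a point the paper leaves implicit, and making it explicit only strengthens the argument.
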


\begin{proof} By assumption, there is a compatible exact Lefschetz fibration $(\pi,E,\omega,\alpha,\chi,X,h)$ which induces $(X,h)$ on the boundary. Note that, by Theorem \ref{thm:Exact_Openbooks_Support}, $(\omega,\alpha,\chi)$ induces $\xi$ on $\partial E$.  Theorem \ref{thm:exactopenbooks_exactLefs.fibs} implies that $\mathcal{S^{C}_{OB}}[(X,h);L]$ is induced by $\mathcal{S^{C}_{LF}}[(\pi,E,\omega,\alpha,\chi,X,h);L]$. Moreover, again by Theorem \ref{thm:Exact_Openbooks_Support}, we know that $\mathcal{S^{C}_{OB}}[(X,h);L]$  carries the contact structure induced by the exact symplectic structure on $\mathcal{S^{C}_{LF}}[(\pi,E,\omega,\alpha,\chi,X,h);L]$. Now the proof follows from Corollary \ref{cor:Contactomorphic_Boundaries}.
\end{proof}

\addcontentsline{toc}{chapter}{\textsc{References}}

\addcontentsline{TOC}{chapter}{References}

\end{document}